\documentclass[12pt]{article}
\usepackage[a4paper,margin=27mm,headheight=14pt]{geometry}
\usepackage[T1]{fontenc}
\usepackage{lmodern,microtype,amsmath,amssymb,amsthm,mathtools}
\usepackage{titlesec,fancyhdr}
\usepackage[colorlinks=true,linkcolor=blue,citecolor=blue,urlcolor=blue]{hyperref}
\hypersetup{pdftitle={Uniqueness and stability for inverse potential scattering with multiple wavenumbers},
pdfauthor={Dong Qiu, Xiang Xu, Yue Zhao}}
\titleformat{\section}[runin]{\normalfont\bfseries}{\thesection.}{.55em}{}[.]
\titleformat{\subsection}[runin]{\normalfont\bfseries}{\thesubsection.}{.55em}{}[.]
\titleformat{\subsubsection}[runin]{\normalfont\bfseries}{\thesubsubsection.}{.55em}{}[.]
\titlespacing*{\section}{0pt}{2.5ex plus 1ex minus .2ex}{.75em}
\titlespacing*{\subsection}{0pt}{2ex plus .8ex minus .2ex}{.75em}
\titlespacing*{\subsubsection}{0pt}{1.8ex plus .6ex minus .2ex}{.75em}
\newtheorem{theorem}{Theorem}[section]
\newtheorem{proposition}[theorem]{Proposition}
\newtheorem{lemma}[theorem]{Lemma}

\theoremstyle{definition}
\theoremstyle{remark}
\newcommand{\R}{\mathbb R}

\newcommand{\Sph}{\mathbb S^2}
\newcommand{\Sc}{\mathcal S}
\newcommand{\norm}[1]{\left\|#1\right\|}
\newcommand{\dd}{\,\mathrm d}
\newcommand{\epsN}{\varepsilon_{\mathrm N}}
\newcommand{\epsE}{\varepsilon_{\mathrm E}}
\newcommand{\ip}[1]{\langle#1\rangle}
\newcommand{\ray}[1]{\mathcal R_+ #1}

\numberwithin{equation}{section}
\title{\bfseries\large UNIQUENESS AND STABILITY FOR
INVERSE POTENTIAL SCATTERING
WITH MULTIPLE WAVENUMBERS}
\author{
Dong Qiu\thanks{\raggedright School of Mathematical Sciences, Zhejiang University,
Hangzhou 310058, China.
\mbox{Email: \href{mailto:qiudong@zju.edu.cn}{\texttt{qiudong@zju.edu.cn}}}.}
\and
Xiang Xu\thanks{\raggedright School of Mathematical Sciences, and Center for
Interdisciplinary Applied Mathematics, Zhejiang University,
Hangzhou 310058, China.
\mbox{Email: \href{mailto:xxu@zju.edu.cn}{\texttt{xxu@zju.edu.cn}}}.}
\and
Yue Zhao\thanks{\raggedright School of Mathematics and Statistics, and Key Lab
NAA-MOE, Central China Normal University, Wuhan 430079, China.
\mbox{Email: \href{mailto:zhaoy@ccnu.edu.cn}{\texttt{zhaoy@ccnu.edu.cn}}}.}
}
\date{}
\begin{document}
\maketitle

\begin{abstract}
This paper is concerned with the inverse scattering problem of determining a real-valued potential for the three-dimensional Schr\"odinger equation from multi-wavenumber data.
Under suitable a priori bounds, we establish local increasing stability from near-field Dirichlet data on a fixed sphere for polynomially decaying potentials, and global increasing stability from full far-field amplitudes for exponentially decaying potentials.
The stability estimates consist of a noise term and a remainder term, with the latter decreasing as the upper measured wavenumber increases.
We also obtain global uniqueness from exact near-field data under weaker regularity assumptions.
The analysis combines analytic continuation, weighted resolvent estimates, and half-ray identities whose differences at chord endpoints cancel the unknown exterior contribution.
\end{abstract}

\noindent\textbf{Key words.} inverse potential scattering; the Schr\"odinger equation; uniqueness; increasing stability.

\medskip
\noindent\textbf{2020 Mathematics Subject Classification.} 35R30, 35P25, 78A46.

\section{Introduction and main results}\label{sec:intro}
Scattering measurements made at a finite distance are affected by an unknown potential outside the observation region.
For polynomially decaying potentials, we use a half-ray identity to cancel this exterior contribution and establish stability for the potential inside the measurement sphere from a finite band of near-field data.
With exact data, the same half-ray information determines the potential throughout space.
We also establish finite-band far-field stability for exponentially decaying potentials.

We consider the three-dimensional Schr\"odinger equation
\begin{equation}\label{eq:model}
 (-\Delta+V-k^2)u_V=0\quad\text{in }\R^3,\qquad
 u_V(x,k,d)=e^{ikx\cdot d}+u_V^s(x,k,d),
 \quad k>0,\quad d\in\Sph,
\end{equation}
where $V$ is real and $u_V^s$ is the outgoing scattered field.
Let $R>0$ and let $I\subset(0,\infty)$ be a finite wavenumber interval of positive length.
We study the inverse scattering problem of determining $V$ from either the near-field Dirichlet boundary data
\[
 u_V^s(x,k,d),\qquad
 x\in\partial B_R,\quad d\in\Sph,\quad k\in I,
\]
or the far-field pattern
\[
 A_V(k,\theta,d),\qquad
 (\theta,d)\in\Sph\times\Sph,\quad k\in I.
\]
Here $d$ is the incident direction and $\theta$ is the observation direction.
Both measurements are complex-valued.

For the exponentially decaying and Schwartz potentials considered below, the scattered field admits the asymptotic expansion
\[
 u_V^s(x,k,d)
 =\frac{e^{ik|x|}}{|x|}A_V(k,\theta,d)
   +O(|x|^{-2}),
 \qquad \theta=\frac{x}{|x|},
\]
as $|x|\to\infty$, uniformly in $\theta$ for fixed $k$ and $d$.
Thus $A_V$ is the coefficient of the outgoing spherical wave, with the normalization specified in \eqref{eq:ampnormalization}.
The potential need not vanish outside $B_R$, and its exterior tail is unknown.
For the stability estimates, we take $I=[K_0,K]$, where $K_0$ is above a threshold determined by the potential class.
The outgoing condition is specified in Section~\ref{sec:weighted}.

Throughout, we use the notation $\ip{x}=(1+|x|^2)^{\frac{1}{2}}$, $B_R=\{x\in\R^3:|x|<R\}$, and $q=V_1-V_2$.
Our Fourier transform convention is $\widehat f(\xi)=\int_{\R^3}e^{-ix\cdot\xi}f(x)\dd x$.
We use $H^s(B_R)$ as a restriction space with its quotient norm.
All constants in the stability estimates are uniform over the stated potential class and independent of the upper wavenumber $K$.

\subsection{Near-field stability and global uniqueness}
For an integer $j\ge0$, define
\begin{equation}\label{nf:reg:norminteger}
 \norm V_{C^j_m}
 =\sum_{|\gamma|\le j}\sup_{x\in\R^3}
       \ip{x}^{m}|\partial^\gamma V(x)|.
\end{equation}
For $0<\beta\le1$, set
\begin{equation}\label{nf:reg:normholder}
 \norm V_{C^{2,\beta}_m}
 =\norm V_{C^2_m}
  +\sum_{|\gamma|=2}\sup_{\substack{x\in\R^3\\0<|h|\le1}}
    \ip{x}^{m}
    \frac{|\partial^\gamma V(x+h)-\partial^\gamma V(x)|}{|h|^\beta}.
\end{equation}
The derivatives are classical.
For the measured band, put
\begin{equation}\label{nf:eq:epsilon}
 \epsN=\sup_{K_0\le k\le K}\sup_{d\in\Sph}
 k\norm{u_{V_1}^s(\cdot,k,d)-u_{V_2}^s(\cdot,k,d)}_{L^2(\partial B_R)}.
\end{equation}
The factor $k$ matches the high-frequency trace size and the bounded analytic field used for continuation.
If $\eta_{\rm N}$ denotes the unweighted uniform trace discrepancy on the same band, then $K_0\eta_{\rm N}\le\epsN\le K\eta_{\rm N}$.

\begin{theorem}\label{intro:thm:nearfield-unique}
Fix $m>\frac{5}{2}$ and $R,Q>0$.
There is $K_0\ge2$, depending only on $m,R,Q$, such that the following holds.
Let $V_1,V_2$ be real with $\norm{V_j}_{C^1_m}\le Q$.
If their scattered Dirichlet traces on $\partial B_R$ agree for every $d\in\Sph$ and every $k$ in an interval of positive length contained in $(K_0,\infty)$, then $V_1=V_2$ throughout $\R^3$.
\end{theorem}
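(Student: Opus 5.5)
First, analytic continuation in $k$. The key claim is that for each $d$ and $x\in\partial B_R$, the map $k\mapsto u^s_{V_j}(x,k,d)$ is real-analytic on $(K_0,\infty)$. This should follow from the weighted resolvent estimates of Section~\ref{sec:weighted}. For $\norm{V}_{C^1_m}\le Q$ with $m>\frac52$ and $k\ge K_0$ large, the outgoing resolvent $R_V(k)=(-\Delta+V-(k+i0)^2)^{-1}$ is bounded between weighted spaces $\ip{x}^{-s}L^2\to\ip{x}^{s}L^2$ with $s>\frac12$. The source $-Ve^{ikx\cdot d}$ is analytic in $k$ and lies in $\ip{x}^{-s}L^2$ because $m>\frac52>\frac32+s$. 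We then continue $k$ into a thin complex strip around $(K_0,\infty)$ by a Neumann series $(I+R_0(k)V)^{-1}$, using the high-frequency smallness $\norm{R_0(k)V}\lesssim Q/k$ for $k\ge K_0(m,R,Q)$. Consequently, agreement on an interval of positive length forces the traces $u^s_{V_1}=u^s_{V_2}$ on $\partial B_R$ for all $k>K_0$ and all $d$.

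Second, reduction to an exterior problem and a high-frequency integral identity. Set $w=u^s_{V_1}-u^s_{V_2}$. It solves $(-\Delta+V_1-k^2)w=-q\,u_{V_2}$, and since $w$ is outgoing, vanishing of its Dirichlet trace on $\partial B_R$ for all $k$ is a strong condition. Testing against the total field $u_{V_1}(\cdot,k,-d')$ gives an Alessandrini-type identity
\[
\int_{\R^3} q\,u_{V_2}(\cdot,k,d)\,u_{V_1}(\cdot,k,-d')\dd x=\text{boundary terms on }\partial B_R.
\]
The test functions are not supported inside $B_R$, so we must still control the exterior region. I would split $\R^3$ into $B_R$ and its complement, and on the exterior use that $w$ is determined by its (zero) Dirichlet trace together with the exterior equation. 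The high-frequency asymptotics $u_{V}=e^{ikx\cdot d}\bigl(1+\frac{1}{2ik}\int_{-\infty}^0 V(x+td)\dd t+O(k^{-2})\bigr)$, uniform on compact sets (these need $C^1_m$ and $m>\frac52$), then yield the ray transform information. Taking $d'=d$ and letting $k\to\infty$ through the identity isolates half-ray integrals $\int_0^\infty q(x+td)\dd t$ for $x\in\partial B_R$.

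Third, the half-ray identity and cancellation. For a chord of $B_R$ with endpoints $x_\pm\in\partial B_R$ in direction $d$, we have
\[
\int_0^\infty q(x_-+td)\dd t-\int_0^\infty q(x_++td)\dd t=\int_{\mathrm{chord}} q,
\]
so the unknown exterior tail cancels. Hence the X-ray transform of $q|_{B_R}$ vanishes, and $q=0$ in $B_R$. For global uniqueness, the half-ray integrals themselves vanish for all boundary points and all directions, including outward ones. For any point $y$ outside $B_R$ and direction $\omega$, I would write the line through $y$ as a difference of half-ray integrals starting from boundary points via other directions, or alternatively use decay to invoke Helgason's support theorem. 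Since $\int_0^\infty q(x+t\omega)\dd t=0$ for $x\in\partial B_R$, $\omega\cdot x>0$, the lines that avoid $B_R$ carry zero integrals, and $q\in C^1$ decays like $\ip{x}^{-m}$ with $m>2$. The exterior X-ray data then vanish, and the support theorem, valid for $O(\ip{x}^{-m})$ functions with $m>2$ via Fourier slice arguments, gives $q=0$ outside $B_R$.

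The main obstacle is the second step. We must extract the half-ray integrals of $q$ at boundary points from the vanishing of $w|_{\partial B_R}$ alone, with the exterior potential unknown, and we need error terms uniform enough to pass $k\to\infty$. I would first try a WKB-type expansion of $w$ near $\partial B_R$: the leading amplitude of $w$ at $x$ is $\frac{1}{2ik}e^{ikx\cdot d}\int_{-\infty}^0 q(x+td)\dd t$, a backward half-ray. Reversing $d$ then provides the forward half-ray, and the remainder would be controlled by the weighted resolvent bound.
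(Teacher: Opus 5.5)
\textbf{The decisive gap is the exterior step.} From $\int_0^\infty q(x+t\omega)\dd t=0$ for all $x\in\partial B_R$ and $\omega\in\Sph$, you only learn that $q$ integrates to zero over lines that \emph{meet} $\overline{B_R}$: add the two opposite half-rays at a boundary point. A line missing $\overline{B_R}$ contains no boundary point. No combination of half-rays issued from $\partial B_R$ reproduces its integral, so ``the lines that avoid $B_R$ carry zero integrals'' does not follow.

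Helgason's support theorem is therefore unavailable. Its hypothesis concerns lines \emph{disjoint} from the ball, which is exactly the complement of your data. It also needs decay faster than every power of $|x|$. Already in the plane, $\operatorname{Re}(z^{-3})$ for $|z|>1$, smoothly cut off inside, integrates to zero over every line missing the unit disk by Cauchy's theorem, yet it decays only like $|x|^{-3}$.

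The idea you are missing is the paper's Riesz-potential identity. Every line through $x\in B_R$ meets the sphere, so integrating the vanishing line integrals over all directions gives $\int_{\R^3}q(y)|x-y|^{-2}\dd y=0$ for $x\in B_R$. That is, $h=|D|^{-1}q$ vanishes in $B_R$. Your chord argument gives $|D|h=q=0$ in $B_R$ as well. Since $q\in L^{6/5}\cap L^2$, Hardy--Littlewood--Sobolev and the Riesz transforms give $h\in H^1(\R^3)$. The antilocality theorem of Ghosh--Salo--Uhlmann for $(-\Delta)^{1/2}$ then forces $h\equiv0$, hence $q\equiv0$ on all of $\R^3$.

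\textbf{A second, repairable flaw is the continuation in your first step.} For $V$ decaying only like $\ip{x}^{-m}$, the source $Ve^{ikx\cdot d}$ is in general in no polynomially weighted $L^2$ space once $\operatorname{Im}k\ne0$. The kernel $e^{ik|x-y|}$ also grows exponentially when $\operatorname{Im}k<0$. So there is no Neumann series on a strip around $(K_0,\infty)$, and real-analyticity of $k\mapsto u^s_V$ is not available for such tails.

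The paper instead modulates. The field $a_{V,d}=e^{-izx\cdot d}u^s_V$ solves an equation whose kernel $e^{iz\tau_d(x,y)}/(4\pi|x-y|)$, with $\tau_d=|x-y|+(y-x)\cdot d\ge0$, is bounded in the closed upper half-plane with weighted norm at most $J_w/|z|$. Thus $z(a_{V_1,d}-a_{V_2,d})|_{\partial B_R}$ is holomorphic on $\{\operatorname{Im}z>0,\ |z|>\kappa\}$ and continuous up to the real axis. Its vanishing on the measured interval propagates by Schwarz reflection and the identity theorem, with no real-analyticity needed.

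After that, your Alessandrini-type identity is superfluous. The half-rays come directly from $2iz\,a_{V,d}(z)\to\ray V(\cdot,d)$ in $L^2(\partial B_R)$. This limit follows from the Born term written in parabolic coordinates plus an $H^1_{\mathrm{loc}}$ bound $C|z|^{-2}$ on the nonlinear remainder, obtained by difference quotients. With only $C^1_m$ regularity this limit has a rate, $A^{-1/2}$, only along $z=iA$, and the paper takes the limit there. Your asserted $O(k^{-2})$ expansion on the real axis needs roughly $C^{2,\beta}_m$, although for uniqueness an $o(1)$ Riemann--Lebesgue statement would suffice.
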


\begin{theorem}\label{intro:thm:nearfield}
Fix $m>\frac{5}{2}$ and $R,Q>0$, and choose $K_0$ as in Theorem~\ref{intro:thm:nearfield-unique}.
Fix $0<\beta\le1$ and $s,Q_s>0$.
There are $C>0$, $M\ge1$, and $\varepsilon_*>0$, depending only on $m,R,Q,\beta,s,Q_s$, such that for real potentials satisfying
\[
 \norm{V_j}_{C^{2,\beta}_m}\le Q\quad(j=1,2),\qquad
 \norm{q|_{B_R}}_{H^s(B_R)}\le Q_s,
\]
and for $K\ge K_0+1$, $0<\epsN<\varepsilon_*$, one has
\begin{equation}\label{intro:eq:nearfield}
 \norm q_{L^2(B_R)}^2
 \le C\left\{K^{\frac{2}{1+2s}}\epsN^2+
   \left(\frac{\log(e+KL_{\mathrm N})}{KL_{\mathrm N}}\right)^{%
              \frac{4s}{1+2s}}\right\},
 \qquad L_{\mathrm N}=\log\frac{M}{\epsN}.
\end{equation}
\end{theorem}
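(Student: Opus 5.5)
We will follow the standard increasing-stability scheme: build a bridge from the boundary data to the Fourier transform of $q$ at frequencies $|\xi|\le 2k$. The low frequencies will be controlled by the data on the measured band, and then by analytic continuation in $k$ up to a larger cutoff. The high frequencies will be handled by the $H^s$ bound. The new ingredient is the unknown exterior part of $q$. To deal with it, we will not integrate over all of $\R^3$ but along half-rays, and take differences at chord endpoints so that the exterior contribution cancels.

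\textbf{Step 1 (Alessandrini-type identity).} Using Green's formula on $B_R$ and on the exterior, together with outgoing radiation, we will express
\[
\int_{\R^3} q\,u_{V_1}(\cdot,k,d)\,u_{V_2}(\cdot,k,-d')\dd x
\]
through boundary integrals of $u^s_{V_1}-u^s_{V_2}$ on $\partial B_R$. The normal-derivative mismatch will be handled through the exterior Dirichlet-to-Neumann map of the unknown tails. Instead of working with the full integral, we will use the high-frequency expansion of the total fields along the direction $d$. This uses the weighted resolvent estimates $\|\ip{x}^{-\sigma}R_V(k)\ip{x}^{-\sigma}\|\lesssim k^{-1}$ for $k\ge K_0$, with $m>5/2$ and $C^{2,\beta}_m$ regularity giving the next term. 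With these, the Born-type approximations become half-ray integrals $\int_0^\infty q(x+td)\dd t$ plus phase terms, with errors $O(k^{-1})$.

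Taking differences of these half-ray identities at the two endpoints $x,y\in\partial B_R$ of a chord parallel to $d$ removes $\int$ over the ray beyond $y$. What remains is the X-ray transform of $q|_{B_R}$ along chords, bounded by $C\epsN$ plus $O(k^{-1})$ remainders. Equivalently, in Fourier terms we get
\[
|\widehat{q\chi_{B_R}}(\xi)|\le C(\epsN+k^{-1})\quad\text{for }|\xi|\le 2k .
\]

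\textbf{Step 2 (analytic continuation).} The data functional $k\mapsto I(k)$ above extends analytically to a sector around $[K_0,\infty)$. It is bounded by $Ce^{c|\Im k|}$; here the factor $k$ in $\epsN$ normalises the trace size. It is at most $\epsN$ on $[K_0,K]$. The two-constants/harmonic-measure lemma then gives
\[
|I(k)|\le C\epsN^{\mu(k)}\quad\text{for }k\in(K,\kappa),
\]
with $\mu(k)\gtrsim (K/\kappa)^{c}$-type decay in the usual form. We will choose $\kappa\sim KL_{\mathrm N}/\log(e+KL_{\mathrm N})$ when $\epsN$ is small relative to $K$, and $\kappa=K$ otherwise.

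\textbf{Step 3 (splitting).} We split
\[
\|q\|^2_{L^2(B_R)}\lesssim\int_{|\xi|\le\rho}|\widehat{q\chi}|^2+\int_{|\xi|>\rho}|\widehat{q\chi}|^2 .
\]
The second term is at most $\rho^{-2s}Q_s^2$. The first is at most $\rho^3(\text{continued bound})^2$ plus a remainder of order $\rho^3 k^{-2}$. Optimising in $\rho$, that is, balancing $\rho^{-2s}$ against the noise with $\rho\sim K^{1/(1+2s)}$ and against the cutoff $\kappa$, produces the exponents $\frac{2}{1+2s}$ and $\frac{4s}{1+2s}$.

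\textbf{Main obstacle.} The hard part is making the Step 1 remainder $O(k^{-1})$ \emph{uniformly} while the exterior tail is unknown. The cancellation at chord endpoints must be exact at leading order, and the remainder must be controlled by the $C^{2,\beta}_m$ norm alone. We will first try a stationary-phase expansion of $u_V$ using the weighted resolvent bound with $\ip{x}^{-m}$ decay.
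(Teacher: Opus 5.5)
\paragraph{Step 2 fails for polynomially decaying tails.} This step is where noncompact tails differ from compact support. For polynomially decaying potentials, neither the fields nor your functional $I(k)$ continue to a two-sided sector around $[K_0,\infty)$ with $e^{c|\operatorname{Im}k|}$ bounds. Below the real axis, the continued free kernel $e^{iz|x-y|}/(4\pi|x-y|)$ grows like $e^{|\operatorname{Im}z||x-y|}$, which polynomial weights cannot absorb. Above it, the plane wave $e^{izx\cdot d}$ (and $e^{izx\cdot(d-d')}$ in an Alessandrini-type integral) grows exponentially on a half-space. With only a one-sided domain, real $k>K$ are boundary points where the harmonic measure of $[K_0,K]$ vanishes. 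So the bound $|I(k)|\le C\epsN^{\mu(k)}$ for real $k>K$ has no basis.

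The missing device is modulation. The field $a_{V,d}(z)=e^{-izx\cdot d}u_V^s$ solves an integral equation with kernel $e^{iz\tau_d(x,y)}/(4\pi|x-y|)$, where $\tau_d(x,y)=|x-y|+(y-x)\cdot d\ge0$. Hence it is holomorphic and $O(|z|^{-1})$ on $\{\operatorname{Im}z>0,\ |z|>\kappa\}$. The half-ray relation $2iz\,a_{V,d}(z,x)=\int_0^\infty V(x-td)\dd t+O(|z|^{-1})$ holds on the closure of that domain, not just for real $k$; note the ray is the upstream one. You then work at $z=iA$, where a Joukowski map gives harmonic measure $\ge cK/A$. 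It is this linear law, combined with your cutoff $A\sim KL_{\mathrm N}/\log(e+KL_{\mathrm N})$, that yields the stated remainder. No Green formula or exterior Dirichlet-to-Neumann map is needed, and none is usable when the two unknown tails differ: the Dirichlet trace alone gives the half-ray values.

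\paragraph{Step 3 gives the wrong exponents.} A pointwise bound on a Fourier ball $|\xi|\le2k$ is the far-field mechanism. After the half-ray limit the frequency has disappeared. The Fourier-slice identity turns the chord estimate into $\int_{\R^3}|\widehat{\mathbf 1_{B_R}q}(\xi)|^2|\xi|^{-1}\dd\xi\le C(D(z)^2+|z|^{-2})$ over all frequencies, where $D(z)=\sup_{d}|z|\norm{a_{V_1,d}(z)-a_{V_2,d}(z)}_{L^2(\partial B_R)}$. The frequency split then costs a factor $\rho$, not $\rho^3$: $\norm q_{L^2(B_R)}^2\le C\{\rho(D(z)^2+|z|^{-2})+\rho^{-2s}\}$. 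Choosing $\rho=|z|^{2/(1+2s)}$ gives exactly $\frac{2}{1+2s}$ and $\frac{4s}{1+2s}$. Your $\rho^3$ split optimises at $\rho=k^{2/(3+2s)}$ and yields the far-field exponents $\frac{6}{3+2s}$ and $\frac{4s}{3+2s}$. Your choice $\rho\sim K^{1/(1+2s)}$ matches neither balance.

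Finally, the high-frequency term cannot use the zero extension $\mathbf 1_{B_R}q$: it is not in $H^s(\R^3)$ for $s\ge\frac12$ under a local prior. You need a single extension operator bounded on the restriction spaces at both orders $-\frac12$ and $s$ (Rychkov's). The zero extension is then used only for the negative-order bound.
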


The quantitative estimate controls $q$ inside the measurement sphere.
The global exact-data conclusion requires only the weighted $C^1$ bound and has no Sobolev prior on the difference.
The two theorems use the same modulated field.
Its half-ray limit supplies a chord identity for local stability and a Riesz potential identity for global uniqueness.

\subsection{Far-field stability for exponential tails}
For $\mu,Q>0$, let
\[
 \mathcal E_{\mu,Q}
 =\{V\in L^\infty(\R^3):V\text{ is real},\
       |V(x)|\le Qe^{-\mu\ip{x}}\text{ a.e.}\}.
\]
The amplitude is normalized by
\begin{equation}\label{eq:ampnormalization}
 A_V(k,\theta,d)=-\frac{1}{4\pi}\int_{\R^3}
        e^{-ik\theta\cdot x}V(x)u_V(x,k,d)\dd x.
\end{equation}
It is continuous for the exponential class and for Schwartz potentials.
For the quantitative result, define
\begin{equation}\label{ff:eq:epsilon}
 \epsE=\sup_{\substack{k\in[K_0,K]\\\theta,d\in\Sph}}
       |A_{V_1}(k,\theta,d)-A_{V_2}(k,\theta,d)|.
\end{equation}

\begin{theorem}
\label{ff:thm:exponential}
Fix $\mu,s,Q,Q_s>0$.
Choose a fixed $K_0\ge2$ above a threshold depending only on these parameters.
There are $C>0$, $M_{\rm e}\ge1$, and $\varepsilon_{\rm e,*}>0$ such that, for $V_1,V_2\in\mathcal E_{\mu,Q}$ with $\norm q_{H^s(\R^3)}\le Q_s$, $K\ge K_0+1$, and $0<\epsE<\varepsilon_{\rm e,*}$,
\begin{equation}\label{ff:eq:exp-main}
 \norm q_{L^2(\R^3)}^2
 \le C\left\{K^{\frac{6}{3+2s}}\epsE^2
       +\left(K+\log L_{\rm e}\right)^{-\frac{4s}{3+2s}}\right\},
 \qquad L_{\rm e}=\log\frac{M_{\rm e}}{\epsE}.
\end{equation}
The constants and error threshold are independent of $K$ and of the potential pair.
Zero discrepancy implies $V_1=V_2$ almost everywhere in $\R^3$.
\end{theorem}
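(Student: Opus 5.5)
We prove Theorem~\ref{ff:thm:exponential} with the standard increasing-stability scheme: split the Fourier transform of $q$ into a low-frequency part and a high-frequency tail. The high-frequency tail is controlled by the Sobolev prior. The low-frequency part is controlled, on $|\xi|\le 2K$, by the far-field discrepancy. Beyond that it is recovered by analytic continuation in $k$.

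\textbf{Step 1 (Alessandrini-type identity and Born approximation).} From \eqref{eq:ampnormalization} and the Lippmann--Schwinger equation we write
\[
 -4\pi\bigl(A_{V_1}-A_{V_2}\bigr)(k,\theta,d)
 =\int_{\R^3} q(x)\,u_{V_1}(x,k,d)\,u_{V_2}(x,k,-\theta)\dd x .
\]
Inserting $u_{V_j}=e^{ikx\cdot d}+u^s_{V_j}$ gives $\widehat q(k(\theta-d))$ plus remainder terms. For the exponential class, $|q|\le 2Qe^{-\mu\ip x}$. Using weighted resolvent bounds $\|e^{-\mu\ip x/2}u^s\|_{L^2}\lesssim k^{-1}$ for $k\ge K_0$ (this is where the threshold $K_0$ enters), the remainder is $O(k^{-1})$, uniformly over the class. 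For $\xi$ with $|\xi|\le 2k$, choose $\theta,d$ with $k(\theta-d)=\xi$. This yields $|\widehat q(\xi)|\le C(\epsE+k^{-1})$. The $k^{-1}$ term is too large to sum over the band directly, so we do not use it that way. Instead, we define for fixed $\xi$ and unit vectors depending analytically on $k$ the function
\[
 I(k)=\int q\,u_{V_1}(\cdot,k,d(k))\,u_{V_2}(\cdot,k,-\theta(k)) ,
\]
and exploit its analyticity.

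\textbf{Step 2 (analytic continuation in $k$).} The complexified directions $d(k),\theta(k)$ satisfy $d\cdot d=1$ and $k(\theta-d)=\xi$. For exponentially decaying $V$, the resolvent $(-\Delta+V-k^2)^{-1}$ continues analytically from the weighted space $e^{-\mu\ip x/2}L^2\to e^{\mu\ip x/2}L^2$ into a sector $S=\{k:\ \Re k>K_0,\ |\Im k|<c\,\Re k\}$ or a strip $|\Im k|<\mu/4$. This holds once $K_0$ exceeds the threshold, since the Neumann series then converges with bound $O(|k|^{-1})$. Hence
\[
 I(k)=\widehat q(\xi)+O(|k|^{-1})
\]
analytically, with the crude growth bound $|I(k)|\le C e^{c|k|}$ in the region. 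On $[K_0,K]$ we have $|I(k)|\le C\epsE$ for $|\xi|\le 2k$. For the real-$k$ representation of $\widehat q(\xi)$ with $|\xi|>2K$, we use instead the Born-term decay: set $J(k)=\int q\,e^{ik x\cdot(d-\theta)}$ in the complex sector, where $d,\theta$ are complex. The standard two-constants (harmonic measure) lemma for a sector then gives, for real $k\ge K$,
\[
 |J(k)|\le C\,e^{ck}\,\epsE^{\,\omega(k)} ,\qquad \omega(k)\gtrsim \Bigl(\tfrac{K}{k}\Bigr)^{\!p}.
\]

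\textbf{Step 3 (frequency splitting).} For a cutoff $\rho\ge K$ we estimate
\[
 \|q\|_{L^2}^2\lesssim \int_{|\xi|\le K}|\widehat q|^2+\int_{K<|\xi|\le\rho}|\widehat q|^2+\rho^{-2s}Q_s^2 .
\]
The first term is bounded by $K^3\epsE^2$ plus a Born remainder. The middle term is bounded by $\rho^3 e^{c\rho}\epsE^{2\omega(\rho)}$. We optimize by taking $\rho=K+\log L_{\rm e}$-type when $\epsE$ is small and $\rho\sim K$ otherwise. Interpolating the $K^3\epsE^2$ term against the $H^s$ bound (choose the effective cutoff $\rho_0$ with $\rho_0^3\epsE^2=\rho_0^{-2s}$) produces the exponents $K^{6/(3+2s)}\epsE^2$ and $(K+\log L_{\rm e})^{-4s/(3+2s)}$ appearing in \eqref{ff:eq:exp-main}. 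Zero discrepancy forces $I\equiv0$ on an interval and hence on the whole region, which gives $\widehat q\equiv0$.

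\textbf{Main obstacle.} The delicate step is the uniform $O(|k|^{-1})$ bound of the Born remainder for complex $k$ with complex directions. The plane waves then grow like $e^{|\Im k||x|}$, and this growth must be absorbed by the weight $e^{-\mu\ip x}$. I would try restricting $|\Im(kd)|\le\mu/4$. This confines the continuation to a strip, and the harmonic-measure exponent is then computed for a strip rather than a sector.
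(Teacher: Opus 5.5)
Your Step~1 is sound: the Alessandrini identity gives the same bound $|\widehat q(\xi)|\le C(\epsE+k^{-1})$ for $|\xi|\le 2k$ that the paper obtains from the Born approximation of each amplitude. The frequency splitting in Step~3, however, is misbalanced. The remainder $Ck^{-1}$ does not shrink with the data, so on a Fourier ball of radius $\rho$ it contributes $C\rho^3k^{-2}$. With your cutoff $\rho\ge K$, your first term is really $C(K^3\epsE^2+K)$, which is not small, and your middle-term bound omits this remainder altogether. The exponents in \eqref{ff:eq:exp-main} come from taking $\rho=k^{2/(3+2s)}\ll k$. This balances $\rho^3k^{-2}$ against the Sobolev tail $\rho^{-2s}$ and gives $\norm{q}_2^2\le C(k^{6/(3+2s)}D(k)^2+k^{-4s/(3+2s)})$ at every real $k$ where the discrepancy $D(k)$ is controlled. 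Your balancing $\rho_0^3\epsE^2=\rho_0^{-2s}$ instead yields a H\"older rate $\epsE^{4s/(3+2s)}$ and cannot produce the factor $K^{6/(3+2s)}$.

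The second gap is that the object you continue cannot carry the data beyond the band. With $k(\theta-d)=\xi$ fixed, $J(k)=\int q\,e^{ikx\cdot(d-\theta)}\dd x\equiv\widehat q(\xi)$ is constant in $k$ and is not bounded by $\epsE$ on $[K_0,K]$, so the two-constants theorem says nothing about it. For $I(k)$ with $|\xi|>2K$, the directions are non-real at every point of the band, so $I$ is never a measured quantity there. Moreover $|\operatorname{Im}(kd)|\ge(|\xi|^2/4-k^2)^{1/2}$, which no fixed weight $e^{-\mu\ip x}$ absorbs. No sector of analyticity is available either: for nonreal $z$ the factors $e^{\pm izx\cdot\omega}$ grow like $e^{|\operatorname{Im}z||x|}$ in some directions, and for $\operatorname{Im}z<0$ the free kernel grows like $e^{|\operatorname{Im}z||x-y|}$. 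So one only has a strip of width comparable to $\mu$.

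The missing idea is to hold the \emph{real} directions $(\theta,d)$ fixed and continue only in the wavenumber. With $w=e^{-\mu\ip x/4}$, the weighted resolvent $wR_0(z)w$ and the amplitude extend holomorphically to $\{\operatorname{Re}z>\kappa_{\rm e},\ |\operatorname{Im}z|<b\}$. There they are bounded by a fixed $M_{\rm e}$, not by $e^{c|k|}$. The harmonic measure of the slit $[K_0,K]$ at real $t>K$ is at least $ce^{-C_1(t-K)}$, so $D(t)\le M_{\rm e}\exp(-cL_{\rm e}e^{-C_1(t-K)})$ at real frequencies $t>K$ with real directions. The Fourier estimate above then applies verbatim at $k=t$. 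Take $t=K+\nu\log L_{\rm e}$ when $\log L_{\rm e}\ge K$, which gives $D(t)\le M_{\rm e}e^{-c\sqrt{L_{\rm e}}}$, and take $k=K$ otherwise. This produces the remainder $(K+\log L_{\rm e})^{-4s/(3+2s)}$.
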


This estimate is global and imposes no spatial derivative bound on the individual potentials.
Exponential decay gives a common analytic strip for their amplitudes.
Propagation of the measured error through this strip yields the double logarithm in \eqref{ff:eq:exp-main}.

At fixed $K$, both stability bounds tend to zero with their data errors.
The stability estimates consist of a noise term and a remainder term, with the latter decreasing as the upper measured wavenumber increases.

\subsection{Previous work}
Multi-frequency inverse scattering and increasing stability are reviewed by Bao, Li, Lin, and Triki \cite{BaoLiLinTriki2015}.
Related potential estimates include fixed-energy scattering stability \cite{Stefanov1990}, instability on an energy interval \cite{Isaev2013}, and increasing stability for the Schr\"odinger boundary map \cite{Isakov2011,IsakovNagayasuUhlmannWang2014}.
Increasing stability and high-wavenumber reconstruction for the linearized inverse Schr\"odinger potential problem were studied
in \cite{IsakovLuXu2020}.

Our work builds on the work of  Bao, Zhai, and Zhao \cite{BZZ}.
Their compactly supported potential analysis combines resolvent bounds, quantitative continuation in the wavenumber, and high-frequency scattering remainders for near-field and far-field measurements.
The strip-continuation lemma used in \cite[Lemma~2.3]{BZZ} comes from Li, Zhai, and Zhao \cite[Lemma~A.1]{LiZhaiZhao2020}; the underlying two-constants argument is classical harmonic measure.
We extend this quantitative strategy to independently unknown noncompact tails.
In \cite[Theorem~2.2]{BZZ}, the compactly supported potentials have individual $H^s(\R^3)$ and $L^\infty(\R^3)$ bounds.
The near-field discrepancy combines the wavenumber-weighted $L^2$ Dirichlet trace with the Neumann trace obtained through the free exterior Dirichlet-to-Neumann map; the measured input is the Dirichlet trace.
Here the near-field estimate uses only an $L^2$ Dirichlet discrepancy, with its frequency normalization in \eqref{nf:eq:epsilon}, and polynomially weighted $C^{2,\beta}$ bounds on the individual potentials.
Our far-field estimate retains bounded individual potentials but replaces compact support by exponential decay and uses a whole-space Sobolev bound on the difference.

The unknown exterior tail creates a specific difficulty in extending \cite{BZZ}.
Outside the measurement sphere, the scattered field satisfies
\[
 (-\Delta-k^2)u_V^s=-V\bigl(e^{ikx\cdot d}+u_V^s\bigr).
\]
The right-hand side need not vanish and contains the unknown exterior potential.
Consequently, the free exterior Dirichlet-to-Neumann map cannot recover the physical Neumann trace from the measured Dirichlet trace.
Truncating the potential outside the sphere would also change the measured field.
Moreover, without compact support, continuation to complex wavenumbers requires estimates that control the tails.

Noncompact tails arise in atomic interactions and screened potentials, and enter scattering calculations in cold-atom vacuum metrology \cite{GribakinFlambaum1993,EdwardsEtAl2017,BarkerEtAl2022}.

For near-field data, we continue a modulated Dirichlet field in the wavenumber and recover its half-ray limit.
Subtracting the half-rays at the two endpoints of a chord cancels the exterior contribution.
The resulting negative Sobolev estimate, combined with a local positive-order prior, gives stability inside the sphere.
For exact data, the sum of the opposite half-rays gives a Riesz potential identity.
The classical antilocality theorem \cite[Theorem~1.2]{GhoshSaloUhlmann2020} then gives uniqueness throughout space.
Thus the same near-field information supplies both interior stability and global exact-data uniqueness, although the exterior tails of the two potentials may differ.
For far-field data, exponential weights provide a common analytic strip for the individual amplitudes and yield whole-space finite-band stability.
These two estimates are the main quantitative contributions.

Qualitative fixed-energy uniqueness for exponentially decreasing potentials was established by Novikov \cite{Novikov1994}, by Eskin and Ralston for electric and magnetic potentials \cite{EskinRalston1995}, and by Uhlmann and Vasy through a density argument \cite{UhlmannVasyExp}.
Our exponential far-field theorem contributes a quantitative finite-band estimate with explicit error and wavenumber dependence.
The question whether finite-band amplitudes determine arbitrary real Schwartz potentials, stated in \cite[Question~1.4.7]{FeldmanSaloUhlmann2025}, remains outside the present results.

Section~\ref{sec:weighted} establishes the complex-frequency field estimates.
Section~\ref{sec:nearfield} proves near-field stability and global uniqueness.
Section~\ref{ff:sec:exponential} treats far-field stability.
Section~\ref{sec:questions} discusses conditional far-field uniqueness and the remaining questions.
Appendices contain the weighted kernel and difference-quotient proofs and the half-ray profiles used in the near-field argument.

\section{Complex-frequency estimates}\label{sec:weighted}
We construct the analytic field used for the near-field argument under the weighted $C^1_m$ bound.
Write $|D|=(-\Delta)^{\frac{1}{2}}$.
The outgoing free resolvent has kernel
\begin{equation}\label{eq:free-kernel}
 R_0(z;x,y)=\frac{e^{iz|x-y|}}{4\pi|x-y|}.
\end{equation}
The scattered field is selected by the outgoing limiting-absorption condition
\begin{equation}\label{eq:outgoing-condition}
 \begin{aligned}
 u_V^s&=-R_0(k+i0)(Vu_V),\\
 R_0(k+i0)&=\lim_{\eta\downarrow0}
       \bigl(-\Delta-(k+i\eta)^2\bigr)^{-1}.
 \end{aligned}
\end{equation}
The resolvent limit is taken in polynomially weighted $L^2$ spaces.
We use this formulation of the outgoing radiation condition for the polynomially decaying potentials.
For exponentially decaying and Schwartz potentials, it agrees with the classical Sommerfeld radiation condition
\begin{equation}\label{eq:sommerfeld}
 \lim_{r\to\infty}r\bigl(\partial_r u_V^s(r\hat x,k,d)
                    -ik u_V^s(r\hat x,k,d)\bigr)=0,
 \qquad \hat x\in\Sph,
\end{equation}
uniformly in $\hat x$, where $r=|x|$ and $\partial_r$ is the radial derivative.
For real $k>0$, we write $R_0(k)=R_0(k+i0)$.
The physical scattered field satisfies
\begin{equation}\label{eq:ls}
 u_V^s=-R_0(k)\bigl(V(e^{ikx\cdot d}+u_V^s)\bigr).
\end{equation}

\subsection{Polynomial weights and modulated fields}
Fix $m>\frac{5}{2}$ and choose $p=p(m)$ once so that
\[
 1<p<\min\left\{\frac{3}{2},\frac{m}{2},m-\frac{3}{2}\right\},\qquad w(x)=\ip{x}^{-p}.
\]
Then $w\in L^3(\R^3)$, and
\begin{equation}\label{nf:weight:translation}
 C_w^{-1}w(x)\le w(x+h)\le C_w w(x),\qquad |h|\le1.
\end{equation}
The three-dimensional strong Huygens principle gives
\[
 \mathbf1_{B_{\frac{t}{4}}}\cos(t|D|)\mathbf1_{B_{\frac{t}{4}}}=0.
\]
Splitting each weight into interior and exterior parts therefore yields
\begin{equation}\label{nf:weight:wave}
 \norm{w\cos(t|D|)w}_{2\to2}\le2\ip{\frac{t}{4}}^{-p},\qquad
 J_w:=\int_0^\infty\norm{w\cos(t|D|)w}_{2\to2}\dd t<\infty.
\end{equation}

Let $V$ be real with $\norm V_{C^1_m}\le Q$, and put $v=\frac{V}{w^2}$, $f_V=\frac{V}{w}$.
The choices $2p<m$ and $m-p>\frac{3}{2}$ imply
\begin{equation}\label{nf:eq:weightedclass}
 \norm v_\infty+\norm{f_V}_2
 +\sum_{j=1}^3\left(
   \norm{\frac{\partial_jV}{w^2}}_\infty+\norm{\frac{\partial_jV}{w}}_2\right)
 \le C_m Q.
\end{equation}
In particular, $V\in L^{\frac{6}{5}}\cap L^2\cap L^\infty$.

For $d\in\Sph$, define the nonnegative delay and weighted kernel
\[
 \tau_d(x,y)=|x-y|+(y-x)\cdot d\ge0,\qquad
 \mathcal B_d(z;x,y)
   =w(x)\frac{e^{iz\tau_d(x,y)}}{4\pi|x-y|}w(y).
\]
The sign of this delay controls the kernel throughout the upper half-plane, even though complex plane-wave multiplication itself need not be bounded on polynomially weighted spaces.
\begin{lemma}\label{nf:weight:kernel}
The operators $\mathcal B_d(z)$ are bounded and holomorphic in Hilbert--Schmidt norm for $\operatorname{Im}z>0$ and are continuous in that norm to the real axis.
Uniformly in $d$ and $\operatorname{Im}z\ge0$,
\begin{equation}\label{nf:weight:highenergy}
 \norm{\mathcal B_d(z)}_{L^2\to L^2}
 \le\frac{J_w}{|z|},\qquad z\ne0.
\end{equation}
\end{lemma}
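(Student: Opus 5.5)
The plan is to first get the Hilbert--Schmidt statements from a pointwise bound on the kernel, then get the high-energy bound \eqref{nf:weight:highenergy} on the real axis from the wave-group representation of the resolvent, and finally push that bound into the upper half-plane with a Phragm\'en--Lindel\"of argument.

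\emph{Hilbert--Schmidt bounds, holomorphy and continuity.} Since $\tau_d\ge0$, for $\operatorname{Im}z\ge0$ we have $|e^{iz\tau_d(x,y)}|=e^{-\operatorname{Im}z\,\tau_d(x,y)}\le1$. Hence
\[
 |\mathcal B_d(z;x,y)|^2\le\frac{w(x)^2w(y)^2}{16\pi^2|x-y|^2}.
\]
Because $w\in L^3$, we have $w^2\in L^{\frac32}$. The Hardy--Littlewood--Sobolev inequality with exponents $\frac23+\frac23+\frac23=2$ then shows the right-hand side is integrable over $\R^3\times\R^3$. This gives a Hilbert--Schmidt bound uniform in $d$ and in $\operatorname{Im}z\ge0$.

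Continuity in Hilbert--Schmidt norm up to the real axis follows by dominated convergence with the same majorant, since the kernel depends continuously on $z$ pointwise.

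For holomorphy on $\operatorname{Im}z\ge\delta>0$, I differentiate in $z$ under the integral. The derivative kernel is $i\tau_d\,\mathcal B_d$, and $\tau_de^{-\delta\tau_d}\le(e\delta)^{-1}$, so it is dominated by a multiple of the same $L^2$ majorant. Difference quotients therefore converge in Hilbert--Schmidt norm, which gives holomorphy for $\operatorname{Im}z>0$.

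\emph{High-energy bound on the real axis.} For real $k\ne0$, multiplication by $e^{\pm ikx\cdot d}$ is unitary and commutes with $w$. Therefore
\[
 \mathcal B_d(k)=e^{-ikx\cdot d}\,wR_0(k)w\,e^{iky\cdot d},
\qquad
 \norm{\mathcal B_d(k)}\le\norm{wR_0(k)w}.
\]
For $\operatorname{Im}z>0$ and $\lambda\ge0$, a direct computation gives
\[
 \int_0^\infty e^{izt}\cos(t\lambda)\dd t=\frac{iz}{z^2-\lambda^2}.
\]
By functional calculus this yields
\[
 \bigl(-\Delta-z^2\bigr)^{-1}=\frac{i}{z}\int_0^\infty e^{izt}\cos(t|D|)\dd t .
\]
I sandwich this between the weights $w$ and use \eqref{nf:weight:wave} with $|e^{izt}|\le1$. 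This gives $\norm{w(-\Delta-z^2)^{-1}w}\le J_w/|z|$ for $\operatorname{Im}z>0$.

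Setting $z=k+i\eta$ and letting $\eta\downarrow0$, the weighted limiting absorption limit in \eqref{eq:outgoing-condition} gives $\norm{wR_0(k)w}\le J_w/|k|$. Alternatively, this step can be read off from the Hilbert--Schmidt continuity already proved, applied to the $d$-independent kernel $w(x)\frac{e^{iz|x-y|}}{4\pi|x-y|}w(y)$.

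\emph{Extension to the half-plane.} Fix $f,g\in L^2$ and set $F(z)=z\langle\mathcal B_d(z)f,g\rangle$. This function is holomorphic in $\operatorname{Im}z>0$ and continuous up to the real axis. By the previous step, $|F|\le J_w\norm f\norm g$ on the real axis; at $z=0$ the bound holds because $F(0)=0$. By the uniform Hilbert--Schmidt bound, $|F(z)|\le C|z|\norm f\norm g$, so $F$ grows at most linearly. This is far below the exponential growth excluded in the Phragm\'en--Lindel\"of theorem for a half-plane. That theorem therefore gives $|F(z)|\le J_w\norm f\norm g$ throughout $\operatorname{Im}z\ge0$. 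Taking the supremum over unit vectors $f,g$ yields \eqref{nf:weight:highenergy}, uniformly in $d$.

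The step I expect to need the most care is the real-axis bound. Complex plane-wave factors are not bounded on the weighted spaces, so the $1/|z|$ decay cannot be obtained by conjugating directly off the axis. That is why the argument proves the decay only where the modulation is unitary and then transfers it using the uniform Hilbert--Schmidt control and Phragm\'en--Lindel\"of. One also has to check that the $\eta\downarrow0$ limit of $w(-\Delta-(k+i\eta)^2)^{-1}w$ agrees with the kernel $w(x)\frac{e^{ik|x-y|}}{4\pi|x-y|}w(y)$. This follows from the dominated-convergence argument in the first step.
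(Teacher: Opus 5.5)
Your proof is correct and takes essentially the same route as the paper. It uses the Hardy--Littlewood--Sobolev majorant for the Hilbert--Schmidt statements, the cosine-wave representation of $wR_0(z)w$ with unitary real modulation to get $\norm{k\mathcal B_d(k)}\le J_w$, and a half-plane maximum principle to carry this bound into $\operatorname{Im}z>0$. The only difference is packaging: you apply Phragm\'en--Lindel\"of directly to the linearly growing $z\langle\mathcal B_d(z)f,g\rangle$, while the paper divides by $1-iaz$, applies the bounded maximum principle, and lets $a\downarrow0$, which is just a proof of that case of Phragm\'en--Lindel\"of.
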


The proof in Appendix~\ref{app:weighted} first obtains the real-axis bound from the wave formula.
A regularized maximum principle then gives the same decay for the modulated kernel throughout the upper half-plane.

Choose $\kappa>1$ sufficiently large in terms of $m,Q$ and write $\mathcal D_\kappa=\{z:\operatorname{Im}z>0,\ |z|>\kappa\}$.
The choice is made once to cover translations of $V$ by vectors of length at most one.
Lemma~\ref{nf:weight:kernel} makes $I+\mathcal B_d(z)v$ invertible by a uniform Neumann series on the closure of this domain.
Define
\begin{equation}\label{nf:weight:field}
 g_{V,d}(z)=-(I+\mathcal B_d(z)v)^{-1}\mathcal B_d(z)f_V,
 \qquad a_{V,d}=w^{-1}g_{V,d}.
\end{equation}
Then $\norm{g_{V,d}(z)}_2\le C|z|^{-1}$.
The function $a_{V,d}$ satisfies the modulated integral equation with kernel $\frac{e^{iz\tau_d(x,y)}}{4\pi|x-y|}$.

For real $k>\kappa$, undoing the unitary modulation gives the outgoing limiting-absorption solution and identifies
\begin{equation}\label{nf:weight:physical}
 a_{V,d}(k,x)=e^{-ikx\cdot d}u_V^s(x,k,d).
\end{equation}
The weighted incident and scattered sources are, respectively,
\[
 f_Ve^{ikx\cdot d}\in L^2,\qquad
 e^{ikx\cdot d}vg_{V,d}(k)\in L^2.
\]
The boundary value of the free resolvent selects the outgoing solution, and the Neumann inverse gives uniqueness in this weighted limiting-absorption class.
For nonreal $z$, \eqref{nf:weight:field} defines the analytic continuation directly.

\begin{lemma}\label{nf:weight:trace}
Assume $\norm V_{C^1_m}\le Q$.
For every bounded smooth set $U\subset\R^3$,
\begin{equation}\label{nf:weight:Hone}
 \norm{a_{V,d}(z)}_{H^1(U)}\le C_U|z|^{-1},
 \qquad z\in\overline{\mathcal D_\kappa}.
\end{equation}
The field is holomorphic into $H^1(U)$ on $\mathcal D_\kappa$.
It is continuous to the real boundary in $H^t(U)$ for every $0<t<1$, and the trace on $\partial B_R$ is continuous there in $L^2(\partial B_R)$ whenever $U$ contains that sphere.
If $b_{V,d}=-w^{-1}\mathcal B_d(z)f_V$ is the modulated first Born term, then
\begin{equation}\label{nf:weight:nonlinear}
 \norm{a_{V,d}(z)-b_{V,d}(z)}_{H^1(U)}
 \le C_U|z|^{-2},\qquad z\in\overline{\mathcal D_\kappa}.
\end{equation}
All constants are uniform in the stated class and domain.
\end{lemma}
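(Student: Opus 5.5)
Starting from \eqref{nf:weight:field}, the function $a=a_{V,d}(z)$ solves the modulated integral equation
\[
 a(x)=-\int_{\R^3}\frac{e^{iz\tau_d(x,y)}}{4\pi|x-y|}V(y)\bigl(1+a(y)\bigr)\dd y .
\]
Set $F=V(1+a)=w\,(f_V+v g_{V,d}(z))$. Then \eqref{nf:eq:weightedclass} and $\norm{g_{V,d}(z)}_2\le C|z|^{-1}$ give $\norm{w^{-1}F}_2\le CQ$, uniformly on $\overline{\mathcal D_\kappa}$. The plan has three steps: an $L^2(U)$ bound of order $|z|^{-1}$, a derivative bound of the same order, and then holomorphy, continuity and the Born remainder \eqref{nf:weight:nonlinear}.

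For the $L^2(U)$ bound, observe that $w^{-1}\le C_U$ on $U$. Hence $\norm{a}_{L^2(U)}\le C_U\norm{g}_2\le C_U|z|^{-1}$ follows directly from the Neumann series and Lemma~\ref{nf:weight:kernel}.

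The derivative bound is the main obstacle. Differentiating the kernel $e^{iz\tau_d}/|x-y|$ in $x$ produces a factor $z$, through $\nabla_x\tau_d=(x-y)/|x-y|-d$, and this would destroy the decay. I would instead translate before differentiating. Write $y=x+h$, so that
\[
 a(x)=-\int\frac{e^{iz(|h|+h\cdot d)}}{4\pi|h|}F(x+h)\dd h .
\]
In these coordinates the kernel no longer depends on $x$, so $\partial_j a=\mathcal K(z)(\partial_j F)$, where $\mathcal K(z)$ is the same convolution and $\partial_jF=\partial_jV(1+a)+V\partial_j a$. Rigorously, I would first work with difference quotients $\delta_h a=(a(\cdot+h)-a)/|h|$ for $|h|\le1$; this is the case the choice of $\kappa$ was made for, namely translates $V(\cdot+h)$. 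The difference quotient satisfies
\[
 \delta_h a=\mathcal K(z)\bigl[(\delta_h V)(1+a(\cdot+h))+V\,\delta_h a\bigr].
\]
In weighted form this reads $(I+\mathcal B_d(z)v)(w\delta_h a)=-\mathcal B_d(z)\bigl(w^{-1}(\delta_hV)(1+a(\cdot+h))\bigr)$. The mean value theorem, the $C^1_m$ bound and \eqref{nf:weight:translation} give $|\delta_hV|\le CQ\,w^2$ near each point. The source is therefore bounded in $L^2$ by $C(\norm{w^{-1}\partial V}_2+\norm{w^{-2}\partial V}_\infty\norm{g(\cdot+h)}_2)\le C$. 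The Neumann inverse together with \eqref{nf:weight:highenergy} then yields $\norm{w\,\delta_h a}_2\le C|z|^{-1}$ uniformly in $h$. Letting $h\to0$ gives $\nabla a\in L^2(w^2\dd x)$ with the same bound, and hence \eqref{nf:weight:Hone}. Uniformity of the Neumann series over the translated weight $v(\cdot+h)$ is exactly what the choice of $\kappa$ was made to guarantee.

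For the remaining claims: holomorphy in $H^1(U)$ holds because the derivative equation is again of Neumann form with holomorphic Hilbert--Schmidt data; the same identities, applied to $z$-difference quotients, give holomorphy of $w\nabla a$. Continuity to the real axis holds in $L^2$ by Lemma~\ref{nf:weight:kernel}. Interpolating this with the uniform $H^1(U)$ bound gives continuity in $H^t(U)$ for $t<1$, and the trace continuity follows since $t>\tfrac12$ suffices. For \eqref{nf:weight:nonlinear}, $a-b=-w^{-1}\mathcal B_d(z)vg$, so
\[
 \norm{a-b}_{L^2(U)}\le C|z|^{-1}\norm{g}_2\le C|z|^{-2}.
\]
Its derivative is handled by the same translated-kernel argument, now with source $V a$, whose weighted $L^2$ norms of both the function and its difference quotients are $O(|z|^{-1})$ by the previous step; applying $\mathcal B_d$ once more gains the second factor $|z|^{-1}$.
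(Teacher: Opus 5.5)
Your proposal is correct and follows essentially the paper's route: spatial difference quotients that exploit the translation invariance of the modulated kernel, the uniform Neumann inverse together with $\norm{\mathcal B_d(z)}\le J_w/|z|$, the local lower bound on $w$, interpolation for the boundary continuity, and the identity $w(a-b)=-\mathcal B_d(z)vg$, differenced in the same way, for the Born remainder. The differences are cosmetic. You invert $I+\mathcal B_d(z)v$ with the translated field in the source, so you do not actually need the translated Neumann bound you invoke, whereas the paper inverts $I+\mathcal B_d(z)v_\eta$ and identifies $g_\eta$ with the translate by uniqueness. Your holomorphy argument goes through the limiting equation for $w\nabla a$, which needs only the routine strong $L^2$ convergence of the source as $h\to0$, while the paper uses weak holomorphy instead.
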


The difference-quotient proof is included in Appendix~\ref{app:weighted}.

\section{Near-field stability and global uniqueness}\label{sec:nearfield}
We first prove Theorem~\ref{intro:thm:nearfield-unique} under the weighted $C^1_m$ assumption and then prove Theorem~\ref{intro:thm:nearfield}.
All fields use the weight chosen in Section~\ref{sec:weighted}.
Choose $K_0>2\kappa$, depending only on $m,R,Q$, for both theorems.
The $C^1_m$ alternative in Proposition~\ref{nf:prop:ray} is used for uniqueness; the quantitative estimates in Section~\ref{nf:sec:stability-proof} use the stronger assumptions of Theorem~\ref{intro:thm:nearfield}.

\subsection{The half-ray limit}
For bounded $x$ and $d\in\Sph$, define
\[
 \ray V(x,d)=\int_0^\infty V(x-td)\dd t,
 \qquad
 b_{V,d}(z,x)=-\int_{\R^3}
     \frac{e^{iz\tau_d(x,y)}}{4\pi|x-y|}V(y)\dd y.
\]
Polynomial decay makes the half-rays absolutely convergent and continuous in $(x,d)$, uniformly for bounded $x$.
The Born integral is absolutely convergent for $\operatorname{Im}z\ge0$: boundedness handles its local singularity and $V\in L^{\frac{6}{5}}$ handles the exterior Newton kernel.

\begin{proposition}\label{nf:prop:ray}
If $\norm V_{C^{2,\beta}_m}\le Q$, $0<\beta\le1$, then
\begin{equation}\label{nf:eq:ray}
 \sup_{d\in\Sph}
 \norm{2iz\,a_{V,d}(z,\cdot)-\ray V(\cdot,d)}_{L^2(\partial B_R)}
 \le C|z|^{-1},\qquad z\in\overline{\mathcal D_\kappa}.
\end{equation}
If only $\norm V_{C^1_m}\le Q$, then
\begin{equation}\label{nf:eq:imaginary-ray}
 \sup_{d\in\Sph}
 \norm{-2A\,a_{V,d}(iA,\cdot)-\ray V(\cdot,d)}_{L^2(\partial B_R)}
 \le CA^{-\frac{1}{2}},\qquad A\ge\kappa.
\end{equation}
\end{proposition}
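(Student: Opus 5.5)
We split $a_{V,d}=b_{V,d}+(a_{V,d}-b_{V,d})$ and use \eqref{nf:weight:nonlinear} of Lemma~\ref{nf:weight:trace}, with $U$ a smooth neighbourhood of $\partial B_R$ and the trace theorem. This gives $\norm{a_{V,d}(z)-b_{V,d}(z)}_{L^2(\partial B_R)}\le C|z|^{-2}$, so after multiplying by $2|z|$ the nonlinear part contributes $O(|z|^{-1})$ in both regimes. Everything therefore reduces to an asymptotic statement for the Born term: we need $2iz\,b_{V,d}(z,x)=\ray V(x,d)+O(|z|^{-1})$ uniformly for $x\in\partial B_R$ and $d$ under the $C^{2,\beta}_m$ bound, and $-2A\,b_{V,d}(iA,x)=\ray V(x,d)+O(A^{-1/2})$ under the $C^1_m$ bound.

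For the Born term we use coordinates adapted to the delay. Write $y=x+r\omega$, so that
\[
 b_{V,d}(z,x)=-\frac1{4\pi}\int_{\Sph}\int_0^\infty e^{izr(1+\omega\cdot d)}V(x+r\omega)\,r\dd r\dd\omega .
\]
The phase vanishes only at $\omega=-d$, where it is stationary in $\omega$. Parametrize $\omega$ by $s=1+\omega\cdot d\in[0,2]$ and an azimuth $\phi$, so that $\dd\omega=\dd s\dd\phi$. Substituting $\rho=rs$ in the $s$-integral gives
\[
 b=-\frac1{4\pi}\int_0^\infty\!\int_0^{2\pi}\!\int_0^{2r}e^{iz\rho}\,V\bigl(x+r\omega(\rho/r,\phi)\bigr)\dd\rho\dd\phi\dd r .
\]
The $\rho$-integral is a Laplace/Fourier integral in $z$ with endpoint at $\rho=0$. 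We integrate by parts once in $\rho$. The boundary term at $\rho=0$ gives $\frac{1}{iz}V(x-rd)$, which produces $-\frac{2\pi}{4\pi iz}\ray V=\frac{\ray V}{-2iz}$. This is the claimed leading term, since $2iz\,b\approx \ray V$ (the sign matches $-2A\,b(iA)$ at $z=iA$). The boundary term at $\rho=2r$ corresponds to the forward direction $\omega=d$, where the phase is $e^{2izr}$. We handle it by a second integration by parts in $r$, which uses one derivative of $V$ and the decay $\ip{y}^{-m}$ with $m>5/2$ to make the $r$-integral converge.

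The remainder after the first integration by parts is
\[
 \frac{1}{iz}\int e^{iz\rho}\,\partial_\rho\bigl[V(x+r\omega)\bigr]\dd\rho .
\]
Here $\partial_\rho(x+r\omega)$ involves $\partial_s\omega\sim s^{-1/2}$ near $s=0$, because $\omega$ is a square-root function of $s$ at the pole. This singularity is the main obstacle.

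In the purely imaginary case $z=iA$ we do not need oscillation; it suffices to bound the remainder directly. The factor $e^{-A\rho}$ localizes to $\rho\lesssim A^{-1}$, i.e.\ $s\lesssim(Ar)^{-1}$. There $|V(x+r\omega)-V(x-rd)|\lesssim r\sqrt s\,\sup|\nabla V|$ (weighted). Integrating gives $O(A^{-1/2})$ after multiplying by $2A$, with weighted $C^1_m$ decay controlling the integral in $r$. This route avoids the $\rho$-integration by parts entirely and yields \eqref{nf:eq:imaginary-ray}.

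For general $z\in\overline{\mathcal D_\kappa}$, including the real axis, we need a full $|z|^{-1}$ gain, so a second integration by parts is required. We first symmetrize in $\phi$: averaging over the azimuth cancels the odd $\sqrt s$ term. This leaves $\partial_\rho$ of the $\phi$-average as a function that is $C^{0,\beta}$ in $\rho$, with a bound involving $\nabla^2V$ and its Hölder modulus weighted by $\ip{\cdot}^{-m}$.

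After this, the oscillatory $\rho$-integral of a Hölder function that is bounded at $\rho=0$ gives $O(|z|^{-1})$ after one more integration by parts. Where a full second derivative is unavailable we use a $\beta$-Hölder difference-quotient argument. This is the place where the $C^{2,\beta}_m$ hypothesis and $m>5/2$ (for integrability in $r$ of $r\ip{x+r\omega}^{-m}$-type weights) are consumed. All estimates are pointwise uniform in $x\in\partial B_R$ and $d$, and squaring and integrating over the sphere gives \eqref{nf:eq:ray}.

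I expect the delicate part to be the uniform control near the pole $\omega=-d$ together with the forward endpoint on the real axis. If the azimuthal averaging does not close cleanly, I would fall back on a smooth cutoff separating a cone around $-d$, treated by the expansion above, from its complement, where $|1+\omega\cdot d|\gtrsim1$ and repeated integration by parts in $r$ gives $O(|z|^{-2})$.
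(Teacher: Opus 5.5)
Your reduction to the Born term via \eqref{nf:weight:nonlinear} and the trace theorem is exactly the paper's. Your delay variable $\rho=r(1+\omega\cdot d)=\tau_d(x,y)$ is the paper's parabolic coordinate $u$. The paper pairs it with $v=2r-\rho$ and integrates out $v$ first, so the Born term becomes the one-sided Laplace integral $-\frac{1}{8\pi}\int_0^\infty e^{izu}H_{x,d}(u)\dd u$, which has no forward endpoint. Your ordering at fixed $r$ creates the extra $e^{2izr}$ terms, and your integration by parts in $r$ does handle them. Your direct mean-value estimate at $z=iA$ is correct; it is the paper's majorant $|H'(u)|\lesssim1+u^{-1/2}$ in another form. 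There is one sign slip: the $\rho=0$ boundary term is $-\frac{1}{iz}V(x-rd)$, so $b\approx\frac{\ray V}{2iz}$. This agrees with your conclusion but not with your intermediate formula.

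The gap is in the $C^{2,\beta}$ case. Hölder continuity of $G=\partial_\rho\bar V$ (with $\bar V$ the azimuthal integral) cannot give $\int_0^{2r}e^{iz\rho}G\dd\rho=O(|z|^{-1})$. A general $C^{0,\gamma}$ function only yields $O(|z|^{-\gamma})$, and that is all your ``$\beta$-H\"older difference-quotient'' fallback produces. Moreover, near the poles $\rho$ scales like the squared circle radius, so $G$ is in general only $\frac{\beta}{2}$-H\"older there. The further integration by parts you invoke needs $\partial_\rho G\in L^1(0,2r)$ with a bound integrable in $r$, and this is exactly where $\beta>0$ is used. Write $\bar V=2\pi\mathcal M(r-\rho,\rho(2r-\rho))$, where $\mathcal M(t,\varrho)$ is the circular mean of Appendix~\ref{app:rays} at squared radius $\varrho$. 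Then
\[
 \frac{\partial_\rho^2\bar V}{2\pi}
 =\mathcal M_{tt}-4(r-\rho)\mathcal M_{t\varrho}
 +4(r-\rho)^2\mathcal M_{\varrho\varrho}-2\mathcal M_\varrho .
\]
This derivative exists classically for $0<\rho<2r$, so the issue is only integrability of the $\mathcal M_{\varrho\varrho}$ term at the two poles.

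The angular cancellation together with the H\"older modulus of $D^2V$ gives $|\mathcal M_{\varrho\varrho}|\lesssim\ip r^{-m}\varrho^{\frac{\beta}{2}-1}$ for $\varrho\le1$. This is integrable because $\varrho\approx2r\rho$ near $\rho=0$, and symmetrically near $\rho=2r$: indeed $r^2\int_0^{1/r}(r\rho)^{\frac{\beta}{2}-1}\dd\rho\lesssim\frac{r}{\beta}$. For $\varrho\ge1$, the bound $\ip r^{-m}\varrho^{-1}$ contributes $r\log(2+r)$. The remaining terms contribute $O(1+r)$, all multiplied by $\ip r^{-m}$, and the total is integrable in $r$ for $m>2$. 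A $C^2$ bound alone gives only $\varrho\mathcal M_{\varrho\varrho}\to0$, which need not be integrable. The boundary values $G(r,0)$ and $G(r,2r)$ of this second integration by parts are $O((1+r)\ip r^{-m})$ by the same cancellation, since $|\mathcal M_\varrho|\lesssim\ip r^{-m}$. With these estimates supplied, your route closes and becomes equivalent to the paper's bound $H''\in L^1(0,\infty)$. Without them, the plan as written yields only $O(|z|^{-\beta/2})$ rather than \eqref{nf:eq:ray}.
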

\begin{proof}
Lemma~\ref{nf:reg:proposition} in Appendix~\ref{app:rays} gives the corresponding estimates for $b_{V,d}$.
Lemma~\ref{nf:weight:trace} and the trace theorem give $\norm{a_{V,d}-b_{V,d}}_{L^2(\partial B_R)}\le C|z|^{-2}$.
Multiplying by $2iz$ adds $O(|z|^{-1})$, which is bounded by $CA^{-\frac{1}{2}}$ on the imaginary axis.
Since $2i(iA)=-2A$, this proves both formulas.
\end{proof}
\subsection{Chord cancellation and the Riesz potential identity}\label{nf:sec:chords}

Set $q=V_1-V_2$ and $f=\mathbf1_{B_R}q$.
For $d\in\Sph$ and $p\in d^\perp$, define the full X-ray transform by
\[
 Xf(p,d)=\int_\R f(p+td)\dd t.
\]
Since $f$ vanishes outside $B_R$, this transform is zero for $|p|\ge R$.
For $|p|<R$, put
\[
 \ell(p)=\sqrt{R^2-|p|^2},\qquad x_\pm=p\pm\ell(p)d.
\]
Then the exact identity
\begin{equation}\label{nf:eq:chord}
 \ray q(x_+,d)-\ray q(x_-,d)
 =\int_{-\ell(p)}^{\ell(p)}q(p+td)\dd t=Xf(p,d)
\end{equation}
cancels the external half-ray.
It does not assume that $V_1$ and $V_2$ agree outside the ball.

The Fourier-slice identity in dimension three gives
\begin{equation}\label{nf:eq:Xidentity}
 \int_{\Sph}\int_{d^\perp}|Xf|^2\dd p\dd d
 =c_X\int_{\R^3}\frac{|\widehat f(\xi)|^2}{|\xi|}\dd\xi,
 \qquad c_X>0.
\end{equation}
It follows from Plancherel on $d^\perp$ and $\int_{\Sph}\delta(\xi\cdot d)\dd d=\frac{2\pi}{|\xi|}$.
Approximation extends it from smooth compactly supported functions to the present bounded $f$ supported in $B_R$.

If $\ray q(x,d)=0$ for every $x\in\partial B_R$ and $d\in\Sph$, the sum of the opposite half-rays gives
\[
 \int_\R q(x+td)\dd t=\ray q(x,d)+\ray q(x,-d)=0
\]
on every line meeting the sphere.
Every line through $x\in B_R$ meets that sphere, so integration over directions yields
\begin{equation}\label{nf:eq:riesz-zero}
 0=\int_{\Sph}\int_\R q(x+td)\dd t\dd d
   =2\int_{\R^3}\frac{q(y)}{|x-y|^2}\dd y,
 \qquad x\in B_R.
\end{equation}
The $C^1_m$ bound gives $q\in L^{\frac{6}{5}}\cap L^2\cap L^\infty$.
The convolution is absolutely convergent: boundedness controls its local singularity, while the exterior kernel belongs to $L^6$ and pairs with $q\in L^{\frac{6}{5}}$.
The same estimate justifies Fubini in \eqref{nf:eq:riesz-zero}.

\begin{equation}\label{nf:eq:P}
 P_d(z)=z\bigl(a_{V_1,d}(z)-a_{V_2,d}(z)\bigr)\big|_{\partial B_R}.
\end{equation}
Lemma~\ref{nf:weight:trace} makes these functions bounded and holomorphic with values in $L^2(\partial B_R)$, and continuous on the real boundary.
One constant $M\ge1$ bounds their norms on $\mathcal D_\kappa$.
\subsection{Exact-data uniqueness throughout space}\label{nf:sec:unique}
\begin{proof}[Proof of Theorem~\ref{intro:thm:nearfield-unique}]
Assume $\norm{V_j}_{C^1_m}\le Q$ and equality of the near-field data on an interval of positive length in $(K_0,\infty)$.
For fixed $d$ and any continuous complex-linear functional $\ell$ on $L^2(\partial B_R)$, the scalar function $\ell(P_d(z))$ is holomorphic on $\mathcal D_\kappa$, continuous to the measured interval, and zero there.
Choose an interior point of that interval and a small disk centered there whose closure avoids $|z|\le\kappa$ and whose real diameter lies in the interval.
Local Schwarz reflection extends the scalar function holomorphically across that diameter.
The identity theorem makes it zero in the disk and then throughout the connected domain $\mathcal D_\kappa$.
Since continuous linear functionals separate points, $P_d(z)=0$ on $\mathcal D_\kappa$.
The imaginary-frequency limit \eqref{nf:eq:imaginary-ray} gives
\begin{equation}\label{nf:eq:zero-rays}
 \ray q(x,d)=0,\qquad x\in\partial B_R,\quad d\in\Sph.
\end{equation}
Initially this is an identity in surface $L^2$.
Continuity of the half-rays makes it pointwise.
Chord subtraction \eqref{nf:eq:chord} and the X-ray identity \eqref{nf:eq:Xidentity} then give $q=0$ in $B_R$, without a positive-order Sobolev prior.
Equation~\eqref{nf:eq:riesz-zero} also holds.

Let $h=|D|^{-1}q$.
The Riesz kernel is a nonzero multiple of $|x|^{-2}$, and the Hardy--Littlewood--Sobolev inequality gives $h\in L^2$.
The Riesz transforms of $q\in L^2$ give $\nabla h\in L^2$.
Hence $h\in H^1(\R^3)$ and
\[
 h=0,\qquad |D|h=q=0\quad\text{in }B_R.
\]
Apply \cite[Theorem~1.2]{GhoshSaloUhlmann2020} to $h\in H^1(\R^3)$ with fractional order $\frac{1}{2}$.
Since both $h$ and $(-\Delta)^{\frac{1}{2}}h$ vanish in the nonempty open ball $B_R$, the theorem gives $h=0$ throughout $\R^3$.
Hence $q=|D|h=0$ throughout $\R^3$.
\end{proof}
\subsection{Finite-band continuation and interior stability}\label{nf:sec:stability-proof}

Assume the hypotheses of Theorem~\ref{intro:thm:nearfield} throughout this subsection.
For $z\in\overline{\mathcal D_\kappa}$ define
\[
 D(z)=\sup_{d\in\Sph}|z|
 \norm{a_{V_1,d}(z)-a_{V_2,d}(z)}_{L^2(\partial B_R)}.
\]
For real $k$, this equals the discrepancy at frequency $k$ in \eqref{nf:eq:epsilon}.
Proposition~\ref{nf:prop:ray}, \eqref{nf:eq:chord}, and the hemisphere projection formula $\dd p=|\nu\cdot d|\dd S\le\dd S$, where $\nu(x)=\frac{x}{R}$ is the outward unit normal on $\partial B_R$, yield
\begin{equation}\label{nf:eq:Xestimate}
 \int_{\Sph}\int_{d^\perp}|Xf(p,d)|^2\dd p\dd d
 \le C(D(z)^2+|z|^{-2}).
\end{equation}
There is no division by a grazing angle.

\begin{lemma}\label{nf:lem:extension}
Fix $s>0$ and assume $\norm{q|_{B_R}}_{H^s(B_R)}\le Q_s$.
There is an extension $F$ of $q|_{B_R}$ satisfying
\begin{align}
 \norm F_{H^{-\frac{1}{2}}(\R^3)}^2
 &\le C(D(z)^2+|z|^{-2}),\label{nf:eq:negative}\\
 \norm F_{H^s(\R^3)}&\le CQ_s.\label{nf:eq:positive}
\end{align}
Consequently, for every $\rho\ge1$,
\begin{equation}\label{nf:eq:split}
 \norm q_{L^2(B_R)}^2
 \le C\{\rho(D(z)^2+|z|^{-2})+\rho^{-2s}\}.
\end{equation}
\end{lemma}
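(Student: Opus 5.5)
The plan is to take $F=\mathcal E(q|_{B_R})$, where $\mathcal E$ is a universal extension operator for the ball that is bounded $H^t(B_R)\to H^t(\R^3)$ simultaneously for all real $t$ in a range containing $-\frac12$ and $s$. Rychkov's universal extension operator for Lipschitz domains has this property, and for the ball one could also use a reflection-type construction. Here $H^t(B_R)$ is the restriction space with its quotient norm, as fixed in the introduction. With this choice, \eqref{nf:eq:positive} is immediate: $\norm F_{H^s(\R^3)}\le C\norm{q|_{B_R}}_{H^s(B_R)}\le CQ_s$, with $C$ depending only on $s$ and $R$.

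For \eqref{nf:eq:negative}, the point is that $f=\mathbf 1_{B_R}q$ is one particular extension of $q|_{B_R}$. By the definition of the quotient norm, $\norm{q|_{B_R}}_{H^{-\frac12}(B_R)}\le\norm f_{H^{-\frac12}(\R^3)}$. Hence
\[
 \norm F_{H^{-\frac12}(\R^3)}\le C\norm f_{H^{-\frac12}(\R^3)}.
\]
Since $\ip{\xi}^{-1}\le|\xi|^{-1}$, we have
\[
 \norm f_{H^{-\frac12}}^2=(2\pi)^{-3}\int_{\R^3}\ip{\xi}^{-1}|\widehat f(\xi)|^2\dd\xi
 \le C\int_{\R^3}\frac{|\widehat f(\xi)|^2}{|\xi|}\dd\xi.
\]
By the Fourier-slice identity \eqref{nf:eq:Xidentity}, the right side equals $c_X^{-1}$ times the X-ray energy. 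Estimate \eqref{nf:eq:Xestimate} bounds that energy by $C(D(z)^2+|z|^{-2})$, which is \eqref{nf:eq:negative}.

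The split \eqref{nf:eq:split} then follows by a frequency cutoff at $|\xi|=\rho$. Since $F=q$ on $B_R$,
\[
 \norm q_{L^2(B_R)}^2\le\norm F_{L^2(\R^3)}^2
 =(2\pi)^{-3}\Bigl(\int_{|\xi|\le\rho}+\int_{|\xi|>\rho}\Bigr)|\widehat F(\xi)|^2\dd\xi.
\]
On the low-frequency part, use $1\le\ip{\rho}\ip{\xi}^{-1}\le2\rho\,\ip{\xi}^{-1}$ for $\rho\ge1$. This bounds it by $2\rho\norm F_{H^{-\frac12}}^2\le C\rho(D(z)^2+|z|^{-2})$. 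On the high-frequency part, $1\le\rho^{-2s}\ip{\xi}^{2s}$ gives the bound $\rho^{-2s}\norm F_{H^s}^2\le CQ_s^2\rho^{-2s}$. The factor $Q_s^2$ is absorbed into $C$.

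The main obstacle is the choice of extension. The naive choice $F=f$ has the right negative-order bound, but the zero extension across $\partial B_R$ is bounded in $H^s$ only for $s<\frac12$. For larger $s$ it would destroy the rate $\rho^{-2s}$. The fix is that the extension must be controlled at both orders at once. A single extension operator bounded on the whole Sobolev scale does this, and the quotient-norm comparison transfers the $H^{-\frac12}$ information from $f$ to $F$. I would check that Rychkov's operator indeed covers negative orders such as $t=-\frac12$ with the restriction-space norm; it does for Lipschitz domains. The remaining routine point is that the constant depends only on $s$ and $R$.
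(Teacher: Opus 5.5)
Your proposal is correct and follows the paper's proof essentially step for step. You apply Rychkov's universal extension to $q|_{B_R}$, which gives \eqref{nf:eq:positive}; the quotient-norm comparison with the zero extension $\mathbf 1_{B_R}q$, combined with \eqref{nf:eq:Xidentity} and \eqref{nf:eq:Xestimate}, gives \eqref{nf:eq:negative}; and a Plancherel split at $|\xi|=\rho$ yields \eqref{nf:eq:split}, with your explicit weight comparisons $\ip{\xi}^{-1}\le|\xi|^{-1}$ and $\ip{\rho}\le2\rho$ filling in details the paper leaves implicit.
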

\begin{proof}
Let $r=q|_{B_R}$.
By Rychkov's universal extension theorem \cite{Rychkov}, a single operator $E$ extends the restriction spaces and is bounded at both indices $-\frac{1}{2}$ and $s$.
Choose $F=Er$.
The quotient definition gives
\[
 \norm r_{H^{-\frac{1}{2}}(B_R)}
 \le\norm{\mathbf1_{B_R}q}_{H^{-\frac{1}{2}}(\R^3)}.
\]
Thus \eqref{nf:eq:Xestimate} and \eqref{nf:eq:Xidentity} imply
\[
 \norm F_{H^{-\frac{1}{2}}}^2
 \le C\norm{\mathbf1_{B_R}q}_{H^{-\frac{1}{2}}}^2
 \le C\int_{\R^3}\frac{|\widehat{\mathbf1_{B_R}q}(\xi)|^2}{|\xi|}\dd\xi
 \le C(D(z)^2+|z|^{-2}).
\]
The same extension operator gives \eqref{nf:eq:positive} from the local prior.
Since $F=q$ in $B_R$, Plancherel and a split at $|\xi|=\rho$ give \eqref{nf:eq:split}.
This uses the positive regularity of $F$; the zero extension is used only in the negative-order estimate.
\end{proof}

Choose $\rho=|z|^{\frac{2}{1+2s}}$.
On the real axis and in the upper half-plane we obtain
\begin{equation}\label{nf:eq:HF}
 \norm q_{L^2(B_R)}^2
 \le C\{|z|^{a_{\rm N}}D(z)^2+|z|^{-2b_{\rm N}}\},\qquad
 a_{\rm N}=\frac{2}{1+2s},\quad b_{\rm N}=\frac{2s}{1+2s}.
\end{equation}
Here $a_{\rm N}+2b_{\rm N}=2$.
The X-ray estimate controls a full negative Sobolev norm, so $\rho$ has no Fourier-aperture restriction.

By \eqref{nf:weight:physical} and \eqref{nf:eq:epsilon}, the boundary norms of $P_d$ on $[K_0,K]$ are at most $\epsN$.
\begin{lemma}\label{nf:lem:harmonic}
For $K\ge K_0+1$ and $A\ge K$,
\begin{equation}\label{nf:eq:propagation}
 D(iA)\le M\exp\!\left(-c\frac{K}{A}\log\frac{M}{\epsN}\right),
 \qquad 0<\epsN<M,
\end{equation}
where $c>0$ depends only on $K_0,\kappa$.
\end{lemma}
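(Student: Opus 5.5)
We would prove Lemma~\ref{nf:lem:harmonic} by a two-constants (harmonic measure) argument applied to scalar subharmonic functions built from $P_d$. Fix $d\in\Sph$ and a unit vector $\phi\in L^2(\partial B_R)$. The function $z\mapsto\ip{P_d(z),\phi}$ is holomorphic on $\mathcal D_\kappa$, continuous up to the real boundary, and bounded by $M$. By \eqref{nf:weight:physical} and \eqref{nf:eq:epsilon}, its modulus is at most $\epsN$ on $[K_0,K]$. Hence $u(z)=\log|\ip{P_d(z),\phi}|$ is subharmonic and satisfies $u\le\log M$ on $\mathcal D_\kappa$ and $u\le\log\epsN$ on $[K_0,K]$. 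If $\omega(z)$ denotes the harmonic measure of the segment $[K_0,K]$ relative to a suitable subdomain $\Omega\subset\mathcal D_\kappa$, the maximum principle gives
\[
 |\ip{P_d(z),\phi}|\le M^{1-\omega(z)}\epsN^{\omega(z)}
 =M\exp\Bigl(-\omega(z)\log\frac{M}{\epsN}\Bigr).
\]
Taking the supremum over $\phi$ and $d$ gives the same bound for $D(z)$, since $D(z)=\sup_d\norm{P_d(z)}$. It therefore remains to show $\omega(iA)\ge cK/A$ for $A\ge K$.

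We would take $\Omega$ to be the upper half-plane with the closed half-disk $\{|z|\le K_0/2\}$ removed. Since $K_0>2\kappa$, this gives $\Omega\subset\mathcal D_\kappa$, which is adequate provided $K_0\ge 2\kappa$ (used with some margin). A simpler route is to compare with the half-plane directly. For the upper half-plane, the harmonic measure of $[K_0,K]$ at $z$ is
\[
 \omega_0(z)=\frac1\pi\bigl(\arg(z-K)-\arg(z-K_0)\bigr).
\]
At $z=iA$ this equals $\frac1\pi\bigl(\arctan\frac{K}{A}-\arctan\frac{K_0}{A}\bigr)$. For $A\ge K\ge K_0+1$ this is at least a constant multiple of $(K-K_0)/A\ge c'K/A$, using $K-K_0\ge K/(K_0+1)$ when $K\ge K_0+1$.

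The main obstacle is the removed half-disk, where $P_d$ is not defined. We would use the function
\[
 \tilde u=u-\log M+\omega_0\log\frac{M}{\epsN}
\]
on $\Omega$, which is subharmonic with boundary values $\le0$ on $[K_0,K]$ and on the rest of the real axis. On the semicircle $|z|=K_0/2$, $\omega_0$ is bounded by some $\omega_0^{\max}<1$, but $\tilde u$ could still be positive there. To handle this, we would instead use the harmonic measure $\omega_\Omega$ of $[K_0,K]$ in $\Omega$ itself. This can be written explicitly via the conformal map $z\mapsto z+\frac{(K_0/2)^2}{z}$, which sends $\Omega$ onto the upper half-plane and the segment $[K_0,K]$ onto $[K_0+\frac{K_0}{4},\,K+\frac{K_0^2}{4K}]$. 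It sends $iA$ to $i\bigl(A-\frac{K_0^2}{4A}\bigr)$, with imaginary part comparable to $A$ for $A\ge K\ge K_0+1$. The arctangent computation above then gives $\omega_\Omega(iA)\ge cK/A$ with $c$ depending only on $K_0$ (and $\kappa$ through the radius). The two-constants inequality with $\omega_\Omega$ yields \eqref{nf:eq:propagation}. The boundedness and continuity of $P_d$ up to the boundary (Lemma~\ref{nf:weight:trace}) justify the maximum principle on the unbounded domain via Phragm\'en--Lindel\"of for bounded subharmonic functions.
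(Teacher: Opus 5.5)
Your proposal is correct and follows essentially the paper's route. You scalarize $P_d$ by norm-one functionals, apply the two-constants theorem with bound $\epsN$ on $[K_0,K]$ and $M$ elsewhere, and bound the harmonic measure at $iA$ from below by a Joukowski map onto the upper half-plane. The only difference is cosmetic: you remove the half-disk of radius $K_0/2$ and use $z+\frac{K_0^2}{4z}$, whereas the paper maps $\mathcal D_\kappa$ itself by $z+\frac{\kappa^2}{z}$. Since $K_0>2\kappa$, your domain lies inside $\mathcal D_\kappa$ and $[K_0,K]$ is still part of its boundary, so it gives the same type of lower bound $\omega(iA)\ge cK/A$.
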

\begin{proof}
The Joukowski map
\[
 J(z)=z+\frac{\kappa^2}{z}
\]
is conformal from $\mathcal D_\kappa$ onto the upper half-plane.
It sends the semicircle to $[-2\kappa,2\kappa]$, the measured interval to $[b_0,b_K]=[J(K_0),J(K)]$, and $iA$ to $iY$, where $Y=A-\frac{\kappa^2}{A}$.
Consequently the harmonic measure $\omega_A$ of the observed interval is
\[
 \omega_A=\frac{1}{\pi}\int_{b_0}^{b_K}\frac{Y}{t^2+Y^2}\dd t.
\]
Since $K_0\ge2\kappa$,
\[
 b_K-b_0=(K-K_0)\left(1-\frac{\kappa^2}{KK_0}\right)
 \ge\frac{3K}{4(K_0+1)},\quad
 b_K\le\frac{5}{4}K,\quad \frac{3}{4}A\le Y\le A .
\]
Thus $\omega_A\ge \frac{cK}{A}$; for instance $c=\frac{9}{41\pi(K_0+1)}$ is admissible.
Apply the two-constants theorem to each norm-one scalarization of $P_d$, with boundary bounds $\epsN$ on the measured interval and $M$ on the rest of the boundary.
This gives
\[
 \norm{P_d(iA)}\le M\left(\frac{\epsN}{M}\right)^{\omega_A}.
\]
The constants are uniform in the scalarization and $d$, so taking suprema proves \eqref{nf:eq:propagation}.
\end{proof}

\begin{proof}[Proof of Theorem~\ref{intro:thm:nearfield}]
Shrink $\varepsilon_*$ so that $L=\log\frac{M}{\epsN}\ge1$.
Put $X=KL$, $H=\log(e+X)$, $a=a_{\rm N}$, and $b=b_{\rm N}$; then $a+2b=2$.
Fix $\gamma>0$ with $\gamma\le1$ and $\frac{2c}{\gamma}\ge a+2b$, and set $A_*=\frac{\gamma X}{H}$.
If $A_*\ge K$, Lemma~\ref{nf:lem:harmonic} and \eqref{nf:eq:HF} imply
\[
 \norm q_{L^2(B_R)}^2
 \le C\{A_*^a M^2e^{-\frac{2cX}{A_*}}+A_*^{-2b}\}
 \le C\left(\frac{H}{X}\right)^{2b}.
\]
Indeed $A_*\le X\le e+X$, so the exponential is absorbed because $\frac{2c}{\gamma}\ge a+2b$.
If $A_*<K$, use the measured real frequency $K$ in \eqref{nf:eq:HF}.
The inequality $A_*<K$ gives $K^{-2b}\le A_*^{-2b}=\gamma^{-2b}(\frac{H}{X})^{2b}$, and hence
\[
 \norm q_{L^2(B_R)}^2
 \le C\left\{K^a\epsN^2+\left(\frac{H}{X}\right)^{2b}\right\}.
\]
Together these cases prove \eqref{intro:eq:nearfield}.
The interval length enters through the factor $\frac{K}{A}$ in Lemma~\ref{nf:lem:harmonic}.
\end{proof}

\section{Far-field stability for exponential tails}\label{ff:sec:exponential}
We first prove Theorem~\ref{ff:thm:exponential} for the class $\mathcal E_{\mu,Q}$ and the discrepancy \eqref{ff:eq:epsilon}.
The proof first obtains a common analytic strip for the individual amplitudes.
It then combines propagation of the measured error with a high-frequency Fourier estimate.
All constants below are uniform over the stated exponential class.

\subsection{Weighted resolvent and amplitude continuation}
\begin{lemma}\label{ff:lem:exp-strip}
There are $b>0$ and $\kappa_{\rm e}>0$ such that $A_V(z,\theta,d)$ is holomorphic and uniformly bounded for
\[
 \operatorname{Re}z>\kappa_{\rm e},\qquad
 |\operatorname{Im}z|<b,
\]
uniformly in $V\in\mathcal E_{\mu,Q}$ and real directions.
For real $k>\kappa_{\rm e}$,
\begin{equation}\label{ff:eq:exp-born}
 \left|A_V(k,\theta,d)
       +\frac{1}{4\pi}\widehat V\bigl(k(\theta-d)\bigr)\right|
 \le Ck^{-1}.
\end{equation}
\end{lemma}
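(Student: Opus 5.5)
We use an exponentially weighted Lippmann--Schwinger formulation, analogous to Section~\ref{sec:weighted} but with exponential instead of polynomial weights. Put $W(x)=e^{-\mu\ip{x}/2}$ and write $V=W\,\sigma_V\,W$ with $|\sigma_V|\le Q$. The amplitude \eqref{eq:ampnormalization} is then expressed through the Born series in symmetrized form,
\[
 A_V(z,\theta,d)=-\frac{1}{4\pi}\Bigl\langle W e^{i\bar z\theta\cdot x},\ \sigma_V\bigl(I+W R_0(z)W\sigma_V\bigr)^{-1}W e^{iz x\cdot d}\Bigr\rangle ,
\]
and every factor is continued to complex $z$. For $|\operatorname{Im}z|<b$ with $b<\mu/4$ (taking real directions), the functions $We^{izx\cdot d}$ and $We^{i\bar z\theta\cdot x}$ lie in $L^2$. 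Their norms are bounded by $\norm{e^{-(\mu/2-b)\ip{x}}}_2$, uniformly in $z$ and in the directions, and they are holomorphic in $z$.

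\emph{Weighted resolvent in the strip.} The kernel $W(x)\frac{e^{iz|x-y|}}{4\pi|x-y|}W(y)$ is defined for all complex $z$ and is entire in $z$. For $|\operatorname{Im}z|<b$ we have $|e^{iz|x-y|}|\le e^{b|x-y|}\le e^{b\ip{x}+b\ip{y}}$, so the kernel is dominated by $e^{-(\mu/2-b)(\ip{x}+\ip{y})}/|x-y|$. This bound is Hilbert--Schmidt, so $WR_0(z)W$ is holomorphic in Hilbert--Schmidt norm throughout the strip. The key quantitative input is a high-frequency bound
\[
 \norm{WR_0(z)W}_{2\to2}\le C|z|^{-1},\qquad \operatorname{Re}z>\kappa_{\rm e},\ |\operatorname{Im}z|<b .
\]
On the real axis this follows from the wave-formula argument of Lemma~\ref{nf:weight:kernel}: since $W\le w$, the bound \eqref{nf:weight:highenergy} at $d$-independent modulation gives $J_w/|k|$. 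For $\operatorname{Im}z<0$ it is the step I expect to be the main obstacle, because the continued resolvent grows like $e^{|\operatorname{Im}z||x-y|}$. I would write
\[
 R_0(z)=\frac{i}{z}\int_0^\infty e^{izt}\,\frac{\sin(t|D|)}{|D|}\,\partial_t\text{-form}
\]
(equivalently, $z\,R_0(z)=i\int_0^\infty e^{izt}\cos(t|D|)\,dt$ modulo a harmless term), and deform or estimate using the strong Huygens principle. Sandwiched between weights, $W\cos(t|D|)W$ has norm at most $Ce^{-\mu t/8}$, because the wave at time $t$ only connects points with $\ip{x}+\ip{y}\gtrsim t$. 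Hence the $t$-integral converges absolutely for $|\operatorname{Im}z|<b\le\mu/16$. This gives $\norm{zWR_0(z)W}\le C$, hence the claimed $O(|z|^{-1})$ bound after accounting for the $1/z$ factor; if that bookkeeping fails, I would fall back on a Phragm\'en--Lindel\"of argument between the real axis and the line $\operatorname{Im}z=-b$, using the crude exponential bound there.

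\emph{Assembly.} Choose $\kappa_{\rm e}$ so that $CQ/\kappa_{\rm e}\le\frac12$. Then the Neumann series for $(I+WR_0(z)W\sigma_V)^{-1}$ converges with norm at most $2$, uniformly over $\mathcal E_{\mu,Q}$, and holomorphically in $z$. Consequently $A_V$ is holomorphic and uniformly bounded on the half-strip. For real $k$ the formula coincides with \eqref{eq:ampnormalization}, by \eqref{eq:ls} and uniqueness of the weighted outgoing solution. Finally, expand the inverse as $I-(I+T\sigma_V)^{-1}T\sigma_V$ with $T=WR_0(k)W$. The first term gives exactly $-\frac{1}{4\pi}\int e^{-ik\theta\cdot x}V e^{ikx\cdot d}\dd x=-\frac1{4\pi}\widehat V(k(\theta-d))$. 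The remainder is bounded by $\frac{1}{4\pi}Q^2\cdot2\cdot\norm{T}\cdot\norm{We^{ikx\cdot d}}_2\norm{We^{ik\theta\cdot x}}_2\le Ck^{-1}$, which is \eqref{ff:eq:exp-born}.
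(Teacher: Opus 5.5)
Your proposal is correct and follows essentially the paper's route. The common steps are: an exponential weight; the strong Huygens splitting to get $\|W\cos(t|D|)W\|\lesssim e^{-ct}$; the wave formula $zR_0(z)=i\int_0^\infty e^{izt}\cos(t|D|)\dd t$ continued into the strip to give the $O(|z|^{-1})$ weighted resolvent bound; a uniform Neumann series; and a bilinear pairing whose leading term is the Born/Fourier term. Your symmetrized total-field formula is only a cosmetic variant of the paper's scattered-field form $h_d=-(I+T(z)v)^{-1}T(z)f_d(z)$. The bookkeeping you worried about works directly, since the $t$-integral converges uniformly on the closed strip including the real axis. So neither the polynomial-weight comparison (which needs $W\le Cw$ rather than $W\le w$) nor the Phragm\'en--Lindel\"of fallback is needed.
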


\begin{proof}
Choose $a=\frac{\mu}{4}$, $w(x)=e^{-a\ip{x}}$, and $b=\frac{a}{8}$.
The strong Huygens principle, with each weight split at radius $\frac{t}{4}$, gives
\[
 \norm{w\cos(t|D|)w}_{2\to2}\le2e^{-\frac{at}{4}}.
\]
Consequently the wave formula
\begin{equation}\label{ff:eq:exp-T}
 T(z):=-\frac{1}{iz}\int_0^\infty
 e^{izt}w\cos(t|D|)w\dd t
\end{equation}
is holomorphic for $\operatorname{Im}z>-\frac{a}{4}$, $z\ne0$, and
\[
 \norm{T(z)}_{2\to2}\le C_a|z|^{-1}
 \quad\text{when }|\operatorname{Im}z|\le b.
\]
For $\operatorname{Im}z>0$ it equals $wR_0(z)w$.
Thus \eqref{ff:eq:exp-T} continues the weighted outgoing free resolvent across the real axis in the required strip.

Put $v=\frac{V}{w^2}$ and
\[
 f_d(z,x)=w(x)^{-1}V(x)e^{izx\cdot d}.
\]
Since $2a<\mu$ and $a+b<\mu$, $v$ is uniformly bounded and $f_d(z)$ is uniformly bounded in $L^2$ on the strip.
It is $L^2$ valued holomorphic there, with exponential domination on each slightly larger closed substrip.
For sufficiently large $\operatorname{Re}z$, define
\[
 h_d(z)=-(I+T(z)v)^{-1}T(z)f_d(z).
\]
The inverse is given by a uniformly convergent Neumann series and $\norm{h_d(z)}_2\le C|z|^{-1}$.
At real $k$ it is the weighted scattered field $wu_V^s(k,d)$.

The amplitude is continued by
\begin{align}\label{ff:eq:exp-A}
 A_V(z,\theta,d)=-\frac{1}{4\pi}\biggl\{&
 \int_{\R^3}V(x)e^{izx\cdot(d-\theta)}\dd x
 +\int_{\R^3}\frac{V(x)}{w(x)}e^{-izx\cdot\theta}
                  h_d(z,x)\dd x\biggr\}.
\end{align}
The first integral is holomorphic and bounded since $2b<\mu$.
The second is a continuous bilinear $L^2$ pairing of holomorphic functions and is $O(|z|^{-1})$.
There is no conjugation in this pairing.
These statements prove the asserted bounded holomorphy, and their real boundary values give \eqref{ff:eq:exp-born}.
\end{proof}

\subsection{Continuation and Fourier splitting}
\begin{proof}[Proof of Theorem~\ref{ff:thm:exponential}]
Put $\alpha_{\rm e}=\frac{6}{3+2s}$ and $\beta_{\rm e}=\frac{2s}{3+2s}$.
Write $q=V_1-V_2$ and $D(k)=\sup_{\theta,d}|A_{V_1}(k,\theta,d)-A_{V_2}(k,\theta,d)|$.
From \eqref{ff:eq:exp-born}, for $|\xi|\le2k$,
\[
 |\widehat q(\xi)|\le C D(k)+Ck^{-1}.
\]
Choose $\rho=k^{\frac{2}{3+2s}}\le2k$ for $k\ge1$.
The low Fourier ball contributes $C\rho^3(D(k)^2+k^{-2})$, and the $H^s$ prior contributes $C\rho^{-2s}$ outside that ball.
Consequently
\begin{equation}\label{ff:eq:exp-fourier}
 \norm q_2^2\le C\left(k^{\alpha_{\rm e}}D(k)^2+k^{-2\beta_{\rm e}}\right).
\end{equation}

Let $M_{\rm e}$ bound the difference of the continued amplitudes in a fixed narrower strip.
Slit that strip along $[K_0,K]$.
For real $t\ge K+1$, the harmonic measure of the slit has the lower bound
\begin{equation}\label{ff:eq:strip-measure}
 \omega_{\rm e}(t)\ge c e^{-C_1(t-K)}.
\end{equation}
For completeness, restrict to the translated right half-strip $\operatorname{Re}z>K-1$ with the terminal slit $[K-1,K]$.
At $K+1$ its slit harmonic measure is a fixed positive number.
A Harnack chain along the real axis, with balls of radius fixed by the strip width and $O(t-K+1)$ balls, gives \eqref{ff:eq:strip-measure}.
Domain comparison gives the same lower bound for the original slit.
The two-constants theorem consequently gives
\[
 D(t)\le M_{\rm e}\exp\left(-c L_{\rm e}e^{-C_1(t-K)}\right).
\]

Set $\ell=\log L_{\rm e}$ and fix $0<\nu\le(2C_1)^{-1}$.
Choose the error threshold so that $\ell\ge1$ and $\nu\ell\ge1$.
If $\ell<K$, use \eqref{ff:eq:exp-fourier} at the measured wavenumber $k=K$.
Since $K+\ell<2K$, the remainder is at most $C(K+\ell)^{-2\beta_{\rm e}}$.
If $\ell\ge K$, put $t=K+\nu\ell$.
Then $t\asymp K+\ell$ and
\[
 D(t)\le M_{\rm e}e^{-c\sqrt{L_{\rm e}}}.
\]
Since $K\le\ell$, the exponentially small data term in \eqref{ff:eq:exp-fourier} is bounded uniformly by $C(K+\ell)^{-2\beta_{\rm e}}$ after reducing the error threshold; the second term has the same bound because $t\asymp K+\ell$.
This proves \eqref{ff:eq:exp-main}.
The zero-error conclusion follows by letting an arbitrary positive error upper bound decrease to zero.
\end{proof}

The strip width in Lemma~\ref{ff:lem:exp-strip} depends on the exponential decay rate $\mu$.
The harmonic-measure lower bound in \eqref{ff:eq:strip-measure} decreases exponentially with the distance beyond the observed interval.
Consequently the continuation step accesses frequencies of size $K+c\log\log(\frac{M_{\rm e}}{\epsE})$ at controlled error.
This gives the remainder in \eqref{ff:eq:exp-main}.
No uniform estimate as $\mu\downarrow0$ follows from this argument.

\section{Further discussion}\label{sec:questions}
Finally, we  briefly discuss uniqueness for Schwartz potentials.
In the textbook \emph{The Calder\'on Problem: An Introduction} (\cite[Question~1.4.7]{FeldmanSaloUhlmann2025}), Feldman, Salo, and Uhlmann ask whether equality of the scattering amplitudes on a frequency interval determines a  Schwartz potential.
In the three-dimensional notation of this paper, the question is whether
\[
 A_{V_1}(k,\theta,d)=A_{V_2}(k,\theta,d),\qquad
 k\in[k_-,K],\quad \theta,d\in\Sph,\quad 0<k_-<K,
\]
implies $V_1=V_2$ for arbitrary real $V_1,V_2\in\Sc(\R^3)$.

A condition on the difference gives a partial answer by combining two classical results.
Novikov--Henkin's Fourier-ball recovery \cite[Corollary~2.5a]{NovikovHenkin1987} gives $\widehat q(\xi)=0$ for $|\xi|<2K$, where $q=V_1-V_2$.
If, in addition,
\[
 \sum_{j=1}^{\infty}\norm{|x|^{2j}q}_{L^2(\R^3)}^{-\frac{1}{2j}}=\infty,
\]
then Plancherel identifies this condition with Chernoff's divergence condition for the $L^2$ norms of the Laplacian iterates of $\widehat q$.
All derivatives of $\widehat q$ vanish at the origin, so Chernoff's theorem \cite[Theorem~6.1]{Chernoff1975} gives $\widehat q=0$ everywhere and hence $V_1=V_2$; a zero moment norm already implies $q=0$.
See also \cite[Theorem~1.1]{ChernoffEuclidean}.

In this paper, our main focus is on stability for inverse potential scattering from multi-wavenumber data. The uniqueness problem for arbitrary real Schwartz potentials posed in Question~1.4.7  remains open.

\appendix
\section{Weighted resolvent and local field estimates}\label{app:weighted}

\begin{proof}[Proof of Lemma~\ref{nf:weight:kernel}]
The Hardy--Littlewood--Sobolev inequality gives the integrable majorant
\[
 \iint\frac{w(x)^2w(y)^2}{|x-y|^2}\dd x\dd y
 \le C\norm{w^2}_{L^{\frac{3}{2}}}^2<\infty.
\]
The nonnegative delay makes the exponential no larger than one in modulus.
This proves the uniform Hilbert--Schmidt bound and boundary continuity.
On compact subsets of the open upper half-plane, the factors $\tau_d^j e^{-\operatorname{Im}z\,\tau_d}$ are bounded; the same majorant proves holomorphy.

For $\operatorname{Im}\zeta>0$, integration by parts in the free wave representation gives
\[
 wR_0(\zeta)w
 =-\frac{1}{i\zeta}\int_0^\infty
 e^{i\zeta t}w\cos(t|D|)w\dd t.
\]
The boundary term at infinity vanishes because $\norm{w\sin(t|D|)|D|^{-1}w}\le t$ and the exponential is damped.
By \eqref{nf:weight:wave}, dominated convergence takes this identity to every real $k\ne0$ and yields $\norm{wR_0(k)w}\le \frac{J_w}{|k|}$.
The Hilbert--Schmidt majorant identifies the limit with the free Green kernel; negative $k$ gives its incoming counterpart.
Real modulation is unitary, and hence
\[
 \mathcal B_d(k)=e^{-ikx\cdot d}wR_0(k)w e^{ikx\cdot d},
 \qquad \norm{k\mathcal B_d(k)}\le J_w.
\]
For $a>0$, the operator function $\frac{z\mathcal B_d(z)}{1-iaz}$ is bounded and holomorphic in the upper half-plane.
For $\operatorname{Im}z\ge0$, $|1-iaz|^2=(1+a\operatorname{Im}z)^2+a^2(\operatorname{Re}z)^2 \ge a^2|z|^2$, so the regularized function is bounded by the Hilbert--Schmidt estimate.
Its denominator vanishes only at $-\frac{i}{a}$.
Its real boundary norm is at most $J_w$.
Apply the bounded analytic maximum principle to scalar pairings with unit vectors and then let $a\downarrow0$.
This proves \eqref{nf:weight:highenergy}.
\end{proof}

\begin{proof}[Proof of Lemma~\ref{nf:weight:trace}]
Fix a coordinate vector $e_j$, and put $V_\eta(x)=V(x+\eta e_j)$, $v_\eta=\frac{V_\eta}{w^2}$, and $f_\eta=\frac{V_\eta}{w}$.
The fundamental theorem along almost every coordinate line, the derived weighted bound \eqref{nf:eq:weightedclass}, and \eqref{nf:weight:translation} give
\begin{equation}\label{nf:weight:diffsources}
 \norm{v_\eta-v}_\infty+\norm{f_\eta-f_V}_2
 \le C Q|\eta|,\qquad |\eta|\le1.
\end{equation}
Only the first derivatives are used.
The translated equations have a common Neumann bound.
With $B=\mathcal B_d(z)$ and $g_\eta=g_{V_\eta,d}(z)$, subtraction gives
\[
 (I+Bv_\eta)(g_\eta-g_0)
 =-B\bigl(f_\eta-f_V+(v_\eta-v)g_0\bigr),
\]
and consequently
\begin{equation}\label{nf:weight:diffg}
 \norm{g_\eta-g_0}_2\le C|\eta||z|^{-1}.
\end{equation}
The unweighted modulated kernel is translation invariant.
Condition \eqref{nf:weight:translation} ensures that translation preserves the weighted $L^2$ class.
Uniqueness therefore identifies $g_\eta(x)=w(x)a_{V,d}(z,x+\eta e_j)$.
Difference quotients in \eqref{nf:weight:diffg}, followed by the local lower bound on $w$, prove \eqref{nf:weight:Hone}.

The holomorphic $L^2$ field is holomorphic as a local distribution.
Spatial differentiation preserves this property.
Together with the locally uniform $H^1$ bound, weak holomorphy and local boundedness give holomorphy into $H^1(U)$.
Indeed, pairings against smooth test functions for the field and its first derivatives are holomorphic; density and the local $H^1$ bound extend this to continuous $H^1$ functionals.
At a real boundary point the fields converge in local $L^2$, while their $H^1$ norms stay bounded.
Interpolation gives convergence in $H^t(U)$ for every $0<t<1$.
Choosing $t>\frac{1}{2}$ and taking the trace proves the claimed boundary continuity on the sphere.

For $r_{V,d}=a_{V,d}-b_{V,d}$, the integral equation gives
\[
 wr_{V,d}=-Bv g_{V,d},\qquad
 \norm{wr_{V,d}}_2\le C|z|^{-2}.
\]
Subtracting this identity for $V_\eta$ and $V$ yields
\[
 \norm{w(r_{V_\eta,d}-r_{V,d})}_2
 \le\norm B\bigl(\norm{v_\eta-v}_\infty\norm{g_\eta}_2
             +\norm v_\infty\norm{g_\eta-g_0}_2\bigr)
 \le C|\eta||z|^{-2}.
\]
Translation invariance identifies the left side with a spatial difference of $r_{V,d}$.
The same difference-quotient argument proves \eqref{nf:weight:nonlinear}, and the trace theorem gives the corresponding $L^2(\partial B_R)$ bound.
\end{proof}

\section{Half-ray profiles}\label{app:rays}
The estimates below supply both the quantitative half-ray remainder and the imaginary-frequency limit for exact uniqueness.
\begin{lemma}\label{nf:reg:proposition}
Let $m>2$ and $R>0$.
If $V\in C^{2,\beta}_m$, $0<\beta\le1$, then
\begin{equation}\label{nf:reg:secondorder}
 \sup_{\substack{x\in\overline{B_R}\\d\in\Sph}}
 |2iz\,b_{V,d}(z,x)-\ray V(x,d)|
 \le C_{m,\beta,R}|z|^{-1}\norm V_{C^{2,\beta}_m},
 \quad \operatorname{Im}z\ge0,\quad |z|\ge1.
\end{equation}
If $V\in C^1_m$, then
\begin{equation}\label{nf:reg:C1imaginary}
 \sup_{\substack{x\in\overline{B_R}\\d\in\Sph}}
 |-2A\,b_{V,d}(iA,x)-\ray V(x,d)|
 \le C_{m,R}\norm V_{C^1_m}A^{-\frac{1}{2}},\qquad A\ge1.
\end{equation}
\end{lemma}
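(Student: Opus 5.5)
The plan is to write the Born integral in polar coordinates centred at $x$ and read off the half-ray from the endpoint of the delay variable where $\tau_d$ vanishes. Put $y=x+r\omega$ with $r>0$ and $\omega\in\Sph$, so that $\tau_d(x,y)=r(1+\omega\cdot d)$. Parametrize $\omega$ by $s=1+\omega\cdot d\in[0,2]$ and an azimuth $\phi$; then $\dd\omega=\dd s\dd\phi$, and $s=0$ corresponds to $\omega=-d$. Let
\[
 H(r,s)=\frac{1}{2\pi}\int_0^{2\pi}V(x+r\omega(s,\phi))\dd\phi
\]
be the azimuthal mean. The Jacobian $r^2/r$ leaves one factor of $r$, and
\[
 b_{V,d}(z,x)=-\frac12\int_0^\infty\!\!\int_0^2 e^{izrs}H(r,s)\,r\dd s\dd r .
\]
Integrating by parts once in $s$, the boundary term at $s=0$ equals $\frac{1}{2iz}\int_0^\infty H(r,0)\dd r=\frac{1}{2iz}\ray V(x,d)$, because $H(r,0)=V(x-rd)$. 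So $2iz\,b_{V,d}-\ray V$ consists of two remainders. The first is the $s=2$ endpoint term $\frac{1}{iz}\int_0^\infty e^{2izr}V(x+rd)\dd r$. The second is $\frac{1}{iz}\int_0^\infty\!\int_0^2 e^{izrs}\partial_sH(r,s)\dd s\dd r$. Both carry an explicit factor $|z|^{-1}$, and every exponential has modulus at most one since $\operatorname{Im}z\ge0$. The work is to show that the remaining integrals are $O(1)$ uniformly for $x\in\overline{B_R}$ and $d\in\Sph$.

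The reason for averaging in $\phi$ first is that $\omega(s,\phi)$ is singular at the poles: $|\partial_s\omega|\sim(s(2-s))^{-1/2}$. The spherical mean $H$, however, is a smooth function of $s$ across $s=0$. Near the pole it depends on the cap radius $\varrho=\sqrt{s(2-s)}$ only through even combinations, since the first-order term $\nabla V\cdot e(\phi)$ averages to zero. I will verify this by a Taylor expansion of $V$ at $x-r(1-s)d$. Under $C^{2,\beta}_m$ this gives
\[
 |\partial_sH|\le C(r+r^2)\langle\cdot\rangle^{-m},\qquad
 \partial_s^2H\in C^{\beta}\text{-type bounds with factor }r^2,
\]
where $\langle\cdot\rangle^{-m}$ denotes the weight evaluated along the relevant sphere $|y-x|=r$. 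The same holds at $s=2$.

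With these bounds I integrate by parts in $s$ once more in the second remainder. This produces a factor $\frac{1}{izr}$, boundary terms $\partial_sH(r,0)$ and $\partial_sH(r,2)$, and an integral of $\partial_s^2H$. The factor $\frac1r$ is harmless because $\partial_sH=O(r)$. For $r>1$ the decay $m>2$ makes $\int r\langle r\rangle^{-m}\dd r$ finite, and because $|x|\le R$ this decay is uniform. The $s=2$ endpoint remainder is handled by one integration by parts in $r$, which uses only $V\in C^1_m$ and the boundary value $V(x)$ at $r=0$. Together these give \eqref{nf:reg:secondorder}. The singular region $r<|z|^{-1}$ I will handle crudely, without integrating by parts: there $|e^{izrs}|\le1$ and its volume is small.

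For the imaginary-axis estimate \eqref{nf:reg:C1imaginary} under only $C^1_m$, I will skip integration by parts and compare directly. Take $z=iA$ and write
\[
 -2A\,b-\ray V=A\int_0^\infty\!\!\int_0^2 e^{-Ars}\bigl(H(r,s)-H(r,0)\bigr)r\dd s\dd r .
\]
The identity holds because $A r\int_0^2e^{-Ars}\dd s=1-e^{-2Ar}$, and the error term $\int_0^\infty e^{-2Ar}V(x-rd)\dd r=O(A^{-1})$. Lipschitz continuity gives $|H(r,s)-H(r,0)|\le Cr\sqrt s\,\sup|\nabla V|$, since $|\omega+d|\sim\sqrt s$. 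Then $Ar^2\int_0^\infty e^{-Ars}\sqrt s\dd s\sim A^{-1/2}r^{1/2}$, and integrating against the $\langle\cdot\rangle^{-m}$ decay (with $m>2$) gives $CA^{-1/2}\norm V_{C^1_m}$.

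The main obstacle I expect is the rigorous smoothness of the spherical mean $H$ in $s$ at the two poles under only $C^{2,\beta}$ regularity. Specifically, I need a bound on $\int_0^2|\partial_s^2H|\dd s$, or a Hölder substitute if $\partial_s^2H$ is only integrable or fails to exist, with the correct powers of $r$ and of the decay weight. If $\partial_s^2H$ is not directly controlled, I would first try a Hölder version: keep $\partial_sH$ and bound $\int_0^2 e^{izrs}\partial_sH\dd s$ by splitting at $s\sim(|z|r)^{-1}$ and using the $C^\beta$ modulus of $\partial_sH$. Which $\beta$-dependent exponent this produces is what I need to track.
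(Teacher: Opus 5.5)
Your route works and differs genuinely from the paper's, though the coordinates are closely related. Your $(r,s)$ correspond to the paper's parabolic coordinates via $(u,v)=(rs,r(2-s))$. In the paper's circular-mean notation, $H(r,s)=\mathcal M\bigl(r(1-s),r^2s(2-s)\bigr)$.

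The paper fixes $v$ and integrates by parts in the delay $u=\tau_d$. Because the phase $e^{izu}$ does not involve $v$, the $(v,\phi)$ integral is done first, and $b_{V,d}$ becomes a one-dimensional transform $\int_0^\infty e^{izu}H(u)\dd u$ with the single endpoint $u=0$. So no forward-ray term, no factor $1/r$, and no $\mathcal M_\rho$ term appear.

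You fix the radius and integrate by parts in $s=\tau_d/r$. The slice-dependent frequency $zr$ creates the forward-ray endpoint $s=2$ (the paper's $v=0$) and a factor $1/(izr)$ at the second step. Both are harmless, since $\partial_sH(r,0),\partial_sH(r,2)=O\bigl((r+r^2)\ip r^{-m}\bigr)$ and $m>2$. In exchange, each slice sits on the sphere $|y-x|=r$ with the single weight $\ip r^{-m}$.

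On the imaginary axis, you compare directly: $|H(r,s)-H(r,0)|\lesssim r\sqrt s\,\ip r^{-m}$ against the kernel $Are^{-Ars}$. This is more elementary than the paper's pointwise majorant for $H'$ and gives the same $A^{-1/2}$. Take that increment along the sphere rather than the chord, or bound the range $s\ge1$ trivially by $e^{-Ar}$. The chord passes near $x$ when $s$ is close to $2$, where the weight $\ip r^{-m}$ is not available.

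Two repairs are needed in the main estimate. First, your bookkeeping after the first integration by parts is off by a factor $iz$. The correct identity is
\[
 2iz\,b_{V,d}(z,x)-\ray V(x,d)
 =-\int_0^\infty e^{2izr}V(x+rd)\dd r
 +\int_0^\infty\!\!\int_0^2 e^{izrs}\partial_sH(r,s)\dd s\dd r ,
\]
so both remainders are only $O(1)$; this is the analogue of \eqref{nf:reg:C1remainder}. The whole factor $|z|^{-1}$ comes from the second round: by parts in $r$ for the forward ray, and in $s$ for the interior term. You do carry this out, but with your stated prefactor the count would give $|z|^{-2}$.

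Second, the estimate you flag as the obstacle is an $L^1$ bound, not a smoothness or H\"older statement. Differentiating the composition gives
\[
 \partial_s^2H=r^2\mathcal M_{tt}-4r^3(1-s)\mathcal M_{t\rho}
 +4r^4(1-s)^2\mathcal M_{\rho\rho}-2r^2\mathcal M_\rho .
\]
The paper's circular-mean bounds then apply, with $\varrho=r\sqrt{s(2-s)}$ the circle radius. Angular cancellation and the H\"older condition give $|\mathcal M_{\rho\rho}|\lesssim\ip r^{-m}\varrho^{\beta-2}$ for $\varrho\le1$ and $|\mathcal M_{\rho\rho}|\lesssim\ip r^{-m}\varrho^{-2}$ for $\varrho\ge1$. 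Together with $|\mathcal M_{t\rho}|\lesssim\ip r^{-m}\varrho^{-1}$ and $|\mathcal M_{tt}|+|\mathcal M_\rho|\lesssim\ip r^{-m}$, this yields $\int_0^2|\partial_s^2H|\dd s\le C\,r^2\log(2+r)\ip r^{-m}\norm V_{C^{2,\beta}_m}$.

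So $\partial_s^2H$ need not be bounded at the poles, but it is integrable. The logarithm comes from the range where the circle radius exceeds one, and it is absorbed after division by $r$ exactly because $m>2$. Do not fall back on a H\"older modulus of $\partial_sH$: that only yields a fractional power of $|z|^{-1}$.
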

\begin{proof}
We first record a bound that uses only one derivative.
Rotate the coordinates so that $d=e_3$ and set $r=x-y$.
The parabolic coordinates
\[
 u=|r|-r\cdot d,\qquad v=|r|+r\cdot d,\qquad
 r=\sqrt{uv}\,\omega_\phi+\frac{(v-u)d}{2}
\]
have $\frac{\dd r}{|r|}=\frac{1}{2}\dd u\dd v\dd\phi$, where $\omega_\phi$ is a unit vector in $d^\perp$.
Consequently
\begin{equation}\label{nf:reg:bornrepresentation}
 \begin{aligned}
 b_{V,d}(z,x)&=-\frac{1}{8\pi}\int_0^\infty e^{izu}H_{x,d}(u)\dd u,\\
 H_{x,d}(u)&=\int_0^\infty\int_0^{2\pi}
 V\bigl(x-\sqrt{uv}\,\omega_\phi-\frac{(v-u)d}{2}\bigr)
 \dd\phi\dd v.
 \end{aligned}
\end{equation}
In particular, $H_{x,d}(0)=4\pi \ray V(x,d)$.
For the trajectory $y=x-\sqrt{uv}\,\omega_\phi-\frac{(v-u)d}{2}$,
\[
 \ip y\ge c_R\ip{u+v},\qquad
 |y_u|\le C(1+\sqrt{\frac{v}{u}}).
\]
It follows that
\begin{equation}\label{nf:reg:C1profile}
 \norm{H'}_{L^1(0,\infty)}
 \le C_R\norm V_{C^1_m}
 \int_0^\infty\int_0^\infty
 (1+\sqrt{\frac{v}{u}})\ip{u+v}^{-m}\dd v\dd u
 \le C_{m,R}\norm V_{C^1_m}.
\end{equation}
For the last inequality, use $r=u+v$ and $a=\frac{u}{u+v}$.
The angular factor $1+\sqrt{\frac{1-a}{a}}$ is integrable on $(0,1)$, and the remaining integral is $\int_0^\infty r\ip r^{-m}\dd r$.
The same estimates show that $H\in W^{1,1}(0,\infty)$, $H(u)\to0$, and its trace at zero equals the value above.
Thus one integration by parts gives
\begin{equation}\label{nf:reg:C1remainder}
 2iz\,b_{V,d}(z,x)-\ray V(x,d)
 =\frac{1}{4\pi}\int_0^\infty e^{izu}H'(u)\dd u,
 \qquad
 |2iz\,b_{V,d}-\ray V|\le C_{m,R}\norm V_{C^1_m}.
\end{equation}
These assertions hold throughout the closed upper half-plane for $z\ne0$; all boundary terms follow from the stated integrability.

For the qualitative limit on the imaginary axis we need a uniform majorant, beyond the $L^1$ bound in \eqref{nf:reg:C1profile}.
The trajectory estimates above give
\begin{equation}\label{nf:reg:C1majorant}
 \sup_{x\in\overline{B_R},d\in\Sph}|H'_{x,d}(u)|
 \le C_{m,R}\norm V_{C^1_m}
 \begin{cases}
  1+u^{-\frac{1}{2}},&0<u\le1,\\
  u^{1-m},&u\ge1.
 \end{cases}
\end{equation}
For $u\le1$, integrate $(1+\sqrt{\frac{v}{u}})\ip v^{-m}$ in $v$.
For $u\ge1$, the substitution $v=ut$ gives the second bound; both integrals converge for $m>2$.
Applying \eqref{nf:reg:C1majorant} to \eqref{nf:reg:C1remainder} gives \eqref{nf:reg:C1imaginary}.

We next prove \eqref{nf:reg:secondorder}.
Define the normalized circular means
\[
 \mathcal N(t,r)=\frac{1}{2\pi}\int_0^{2\pi}
 V(x-td-r\omega_\phi)\dd\phi,
 \qquad \mathcal M(t,\rho)=\mathcal N(t,\sqrt\rho).
\]
For $0<u\le1$, put $t=\frac{v-u}{2}$ and $r=\sqrt{uv}$.
At $0<r\le1$, the cancellation of the angular average of $\nabla V(x-td)\cdot\omega_\phi$ yields
\begin{align*}
 \mathcal N_{rr}(t,r)-\frac{\mathcal N_r(t,r)}{r}
 &=\frac{1}{2\pi}\int_0^{2\pi}\int_0^1
 \bigl(D^2V(x-td-r\omega_\phi)\\
 &\qquad{}-D^2V(x-td-\theta r\omega_\phi)\bigr)
       [\omega_\phi,\omega_\phi]\dd\theta\dd\phi.
\end{align*}
The local weighted H\"older condition therefore gives
\[
 |\mathcal M_{\rho\rho}(t,uv)|
 =\frac{|\mathcal N_{rr}(t,r)-\frac{\mathcal N_r(t,r)}{r}|}{4r^2}
 \le C\norm V_{C^{2,\beta}_m}\ip v^{-m}r^{\beta-2}.
\]
The points in this disk have weight comparable to $\ip v$ when $v$ is large; bounded $v$ contributes only to the constant.
For $r\ge1$, the direct identity for $\mathcal M_{\rho\rho}$ instead gives the bound $C\norm V_{C^2_m}\ip v^{-m}r^{-2}$.
Also
\[
 |\mathcal M_{t\rho}(t,uv)|\le C\norm V_{C^2_m}\ip v^{-m}r^{-1},
 \qquad
 |\mathcal M_{tt}(t,uv)|\le C\norm V_{C^2_m}\ip v^{-m}.
\]
The disk formula for $\mathcal M_\rho$, or the same angular cancellation, gives $|\mathcal M_\rho|\le C\norm V_{C^2_m}\ip v^{-m}$ also at $r=0$.
Hence $H'(0)$ exists and is bounded when $m>2$.

The second derivative along $u\mapsto(\frac{v-u}{2},uv)$ is
\[
 \frac{1}{4}\mathcal M_{tt}-v\mathcal M_{t\rho}
       +v^2\mathcal M_{\rho\rho}.
\]
Its mixed term is integrable since $\int_0^1v(uv)^{-\frac{1}{2}}\dd u=2\sqrt v$.
For $v\ge1$, the small-radius term satisfies
\[
 \int_0^{\frac{1}{v}}v^2(uv)^{-1+\frac{\beta}{2}}\dd u=\frac{2}{\beta} v,
\]
and the remaining radius range contributes $\int_{\frac{1}{v}}^1\frac{v}{u}\dd u=v\log v$.
The contribution of $0\le v\le1$ is bounded.
It follows that
\[
 \int_0^1|H''(u)|\dd u
 \le C_{\beta,R}\norm V_{C^{2,\beta}_m}
 \int_0^\infty(1+\sqrt v+v+v\log(1+v))\ip v^{-m}\dd v<\infty.
\]
For $u\ge1$, direct differentiation of the trajectory gives $|y_{uu}|\le C\sqrt v\,u^{-\frac{3}{2}}$, and hence
\[
 |H''(u)|\le C_R\norm V_{C^2_m}
 \int_0^\infty\left(1+\frac{v}{u}+\frac{\sqrt v}{u^{\frac{3}{2}}}\right)
 \ip{u+v}^{-m}\dd v
 \le C_{m,R}\norm V_{C^2_m}u^{1-m}.
\]
Thus $H''\in L^1(0,\infty)$ with the claimed uniform bound.
The corresponding first-derivative estimate shows $H'(u)\to0$.
One further integration by parts in \eqref{nf:reg:C1remainder} proves \eqref{nf:reg:secondorder}.

\end{proof}


\begin{thebibliography}{99}
\small\raggedright

\bibitem{BaoLiLinTriki2015}
G. Bao, P. Li, J. Lin, and F. Triki,
\emph{Inverse scattering problems with multi-frequencies},
Inverse Problems \textbf{31} (2015), no.~9, 093001.
\href{https://doi.org/10.1088/0266-5611/31/9/093001}{doi:10.1088/0266-5611/31/9/093001}.

\bibitem{BarkerEtAl2022}
D. S. Barker, B. P. Acharya, J. A. Fedchak, N. N. Klimov,
E. B. Norrgard, J. Scherschligt, E. Tiesinga, and S. P. Eckel,
\emph{Precise quantum measurement of vacuum with cold atoms},
Rev. Sci. Instrum. \textbf{93} (2022), no.~12, 121101.
\href{https://doi.org/10.1063/5.0120500}{doi:10.1063/5.0120500}.

\bibitem{BZZ}
G. Bao, J. Zhai, and Y. Zhao,
\emph{Stability for the inverse potential scattering problems with multi-wavenumbers},
SIAM J. Math. Anal. \textbf{58} (2026), no.~3, 2111--2137.
\href{https://doi.org/10.1137/25M1767108}{doi:10.1137/25M1767108}.

\bibitem{ChernoffEuclidean}
M.~Bhowmik, S.~Pusti, and S.~K.~Ray,
\emph{A theorem of Chernoff on quasi-analytic functions for Riemannian
symmetric spaces},
Int. Math. Res. Not. IMRN (2023), no.~13, 10942--10963.
\href{https://doi.org/10.1093/imrn/rnab143}{doi:10.1093/imrn/rnab143}.

\bibitem{Chernoff1975}
P. R. Chernoff,
\emph{Quasi-analytic vectors and quasi-analytic functions},
Bull. Amer. Math. Soc. \textbf{81} (1975), no.~4, 637--646.
\href{https://doi.org/10.1090/S0002-9904-1975-13806-7}{doi:10.1090/S0002-9904-1975-13806-7}.

\bibitem{EdwardsEtAl2017}
J. P. Edwards, U. Gerber, C. Schubert, M. A. Trejo, and A. Weber,
\emph{The Yukawa potential: ground state energy and critical screening},
Prog. Theor. Exp. Phys. (2017), no.~8, 083A01.
\href{https://doi.org/10.1093/ptep/ptx107}{doi:10.1093/ptep/ptx107}.

\bibitem{EskinRalston1995}
G. Eskin and J. Ralston,
\emph{Inverse scattering problem for the Schr\"odinger equation with
magnetic potential at a fixed energy},
Comm. Math. Phys. \textbf{173} (1995), no.~1, 199--224.
\href{https://doi.org/10.1007/BF02100187}{doi:10.1007/BF02100187}.

\bibitem{FeldmanSaloUhlmann2025}
J. Feldman, M. Salo, and G. Uhlmann,
\emph{The Calder\'on Problem: An Introduction},
Graduate Studies in Mathematics, vol.~253,
Amer. Math. Soc., Providence, RI, 2025.
\href{https://doi.org/10.1090/gsm/253}{doi:10.1090/gsm/253}.

\bibitem{GhoshSaloUhlmann2020}
T. Ghosh, M. Salo, and G. Uhlmann,
\emph{The Calder\'on problem for the fractional Schr\"odinger equation},
Anal. PDE \textbf{13} (2020), no.~2, 455--475.
\href{https://doi.org/10.2140/apde.2020.13.455}{doi:10.2140/apde.2020.13.455}.

\bibitem{GribakinFlambaum1993}
G. F. Gribakin and V. V. Flambaum,
\emph{Calculation of the scattering length in atomic collisions using
the semiclassical approximation},
Phys. Rev. A \textbf{48} (1993), no.~1, 546--553.
\href{https://doi.org/10.1103/PhysRevA.48.546}{doi:10.1103/PhysRevA.48.546}.

\bibitem{Isaev2013}
M. I. Isaev,
\emph{Exponential instability in the inverse scattering problem on the energy interval},
Funct. Anal. Appl. \textbf{47} (2013), no.~3, 187--194.
\href{https://doi.org/10.1007/s10688-013-0025-9}{doi:10.1007/s10688-013-0025-9}.

\bibitem{Isakov2011}
V. Isakov,
\emph{Increasing stability for the Schr\"odinger potential from the
Dirichlet-to-Neumann map},
Discrete Contin. Dyn. Syst. Ser. S \textbf{4} (2011), no.~3, 631--640.
\href{https://doi.org/10.3934/dcdss.2011.4.631}{doi:10.3934/dcdss.2011.4.631}.

\bibitem{IsakovLuXu2020}
V. Isakov, S. Lu, and B. Xu,
\emph{Linearized inverse Schr\"odinger potential problem at a large wavenumber},
SIAM J. Appl. Math. \textbf{80} (2020), no.~1, 338--358.
\href{https://doi.org/10.1137/18M1226932}{doi:10.1137/18M1226932}.

\bibitem{IsakovNagayasuUhlmannWang2014}
V. Isakov, S. Nagayasu, G. Uhlmann, and J.-N. Wang,
\emph{Increasing stability of the inverse boundary value problem
for the Schr\"odinger equation},
in \emph{Inverse Problems and Applications},
Contemp. Math. \textbf{615}, Amer. Math. Soc., Providence, RI,
2014, 131--141.
\href{https://doi.org/10.1090/conm/615/12268}{doi:10.1090/conm/615/12268}.

\bibitem{LiZhaiZhao2020}
P. Li, J. Zhai, and Y. Zhao,
\emph{Stability for the acoustic inverse source problem in inhomogeneous media},
SIAM J. Appl. Math. \textbf{80} (2020), no.~6, 2547--2559.
\href{https://doi.org/10.1137/20M1334267}{doi:10.1137/20M1334267}.

\bibitem{NovikovHenkin1987}
R. G. Novikov and G. M. Henkin,
\emph{The $\bar\partial$-equation in the multidimensional inverse scattering problem},
Russian Math. Surveys \textbf{42} (1987), no.~3, 109--180.
\href{https://doi.org/10.1070/RM1987v042n03ABEH001419}{doi:10.1070/RM1987v042n03ABEH001419}.

\bibitem{Novikov1994}
R. G. Novikov,
\emph{The inverse scattering problem at fixed energy for the three-dimensional
Schr\"odinger equation with an exponentially decreasing potential},
Comm. Math. Phys. \textbf{161} (1994), no.~3, 569--595.
\href{https://doi.org/10.1007/BF02101933}{doi:10.1007/BF02101933}.

\bibitem{Rychkov}
V. S. Rychkov,
\emph{On restrictions and extensions of the Besov and Triebel--Lizorkin spaces with respect to Lipschitz domains},
J. London Math. Soc. (2) \textbf{60} (1999), no.~1, 237--257.
\href{https://doi.org/10.1112/S0024610799007723}{doi:10.1112/S0024610799007723}.

\bibitem{Stefanov1990}
P. Stefanov,
\emph{Stability of the inverse problem in potential scattering at fixed energy},
Ann. Inst. Fourier \textbf{40} (1990), no.~4, 867--884.
\href{https://doi.org/10.5802/aif.1239}{doi:10.5802/aif.1239}.

\bibitem{UhlmannVasyExp}
G. Uhlmann and A. Vasy,
\emph{Fixed energy inverse problem for exponentially decreasing potentials},
Methods Appl. Anal. \textbf{9} (2002), no.~2, 239--248.
\href{https://doi.org/10.4310/MAA.2002.v9.n2.a2}{doi:10.4310/MAA.2002.v9.n2.a2}.

\end{thebibliography}
\end{document}